\newcolumntype{C}[1]{>{\centering\arraybackslash}p{#1}}
\newcommand{\stkout}[1]{\ifmmode\text{\sout{\ensuremath{#1}}}\else\sout{#1}\fi}
\newcommand{\Be}{\begin{equation}}
\newcommand{\Ee}{\end{equation}}
\newcommand{\Bea}{\begin{eqnarray}}
\newcommand{\Eea}{\end{eqnarray}}
\newcommand{\Bel}{\begin{align}}
\newcommand{\Eel}{\end{align}}
\newcommand{\Beas}{\begin{eqnarray*}}
	\newcommand{\Eeas}{\end{eqnarray*}}
\newcommand{\Benu}{\begin{enumerate}}
	\newcommand{\Eenu}{\end{enumerate}}
\newcommand{\Bi}{\begin{itemize}}
	\newcommand{\Ei}{\end{itemize}}
\numberwithin{equation}{section}
\newcommand{\supp} {\text{supp\! }}
\theoremstyle{plain}
\newtheorem{thm}{Theorem}[section]
\newtheorem{cor}[thm]{Corollary}
\newtheorem{lem}[thm]{Lemma}
\newtheorem{prop}[thm]{Proposition}
\theoremstyle{remark}
\newtheorem{rmk}{Remark}  
\theoremstyle{definition}
\newtheorem{defn}[thm]{Definition}
\definecolor{ao}{rgb}{0, 0.5, 0}
\newcommand{\R}{\mathbb R}
\pgfplotsset{compat=1.13,
tick label style={color=white},
  label style={font=\small},
  legend style={font=\small}}
\title[unions of curves]
{Remarks on dimension of unions of curves}
\author{Seheon Ham}
\author{Hyerim Ko}
\author{Sanghyuk Lee}
\author{Sewook Oh}
\thanks{}
\keywords{union of curves, average over curves, local smoothing estimates}
\subjclass[2020]{42B25, 28A75}
\address{Department of Mathematical Sciences and RIM, Seoul National University, Seoul 08826, Republic of Korea}
\email{seheonham@snu.ac.kr}
\email{kohr@snu.ac.kr}
\email{shklee@snu.ac.kr}
\email{dhtpdnr0220@snu.ac.kr}
\begin{document}

\begin{abstract}
We study an analogue of Marstrand's circle packing problem for curves in higher dimensions. 
We consider collections of curves which are generated by translation and dilation of a curve $\gamma$ in $\mathbb R^d$, i.e., $ x + t \gamma$, $(x,t) \in \mathbb R^d \times (0,\infty)$. 
For a Borel set $F \subset  \mathbb R^d\times (0,\infty)$, we show the unions of curves $\bigcup_{(x,t) \in F} ( x+t\gamma )$ has Hausdorff dimension  at least $\alpha+1$ whenever $F$ has Hausdorff dimension bigger than $\alpha$, $\alpha\in (0, d-1)$.   We also 
%Our result can be easily extended to
obtain results for unions of curves generated by multi-parameter dilation of $\gamma$. 
%We also consider union of variable hypersurfaces in $\mathbb R^d$, $d\ge2$. 
One of the main ingredients is a local smoothing type estimate (for averages over curves) relative to fractal measures. 
\end{abstract}

\maketitle

\section{Introduction}

Let $\gamma $ be a smooth curve from $I_\circ =[0,1]$ to $\mathbb R^d$, $d\ge2$. For a given $\gamma$, we consider the curves given  by translation and dilation of $\gamma(I):=
\{\gamma(s): s\in I\subset \subset I_\circ\}$.
For a Borel set $F \subset \mathbb R^d\times (0,\infty)$, denote  
\begin{equation}\label{union-curve} 
\Gamma(F) :=  \bigcup_{(x,t) \in F} \big( x+t\gamma(I) \big), 
\end{equation}
where $(x,t)\in \mathbb R^d\times (0,\infty).$  In this paper, we are concerned with the problem of determining how small the dimension of $\Gamma(F)$ can be depending on that  of $F$. 

This problem has a rich history when $\gamma$ is the unit circle in $\mathbb R^2$. 
Besicovitch--Rado \cite{BR} and Kinney \cite{Ki} showed that there exists a set 
%$\mathfrak S$ 
(so-called BRK set) of Lebesgue measure zero which contains a circle of every diameter, i.e., $\Gamma(F)$ with $F=\{ (X(t),t): t \in (0,\infty)\}$ for some function  $X:\mathbb R^+ \rightarrow \mathbb R^2$
(see also \cite{Da, KW} for further generalization). In the same vein, it is natural to ask whether or not there exists a plane set of measure zero containing circles with centers at every point, i.e., $\Gamma(F)$ with $F = \{ (x, T(x)): x \in \mathbb R^2\}$ for a function $T:\mathbb R^2\rightarrow \mathbb R^+$ (see \cite{Le} or \cite[p.105]{Fa}).
Marstrand \cite{Marstrand} showed that such a set cannot be of Lebesgue measure zero. 

Those problems for the circle in $\mathbb R^2$ generalize naturally with subsets $F \subset \mathbb R^2 \times (0,\infty)$.
Let ${\rm Proj}_x F$ be the orthogonal projection of $F$ onto $x$-space. 
It was proved in \cite{Marstrand} that if ${\rm Proj}_x F$ has positive Lebesgue measure, then so does $\Gamma(F)$
 (see also \cite{Bourgain}). 
Later, Mitsis \cite{Mitsis} showed  that it is sufficient for $\Gamma(F)$ being of positive measure  that  $\dim_H ({\rm Proj}_x F) > 3/2$. Here, $\dim_H $ denotes the Hausdorff dimension. 
It was also conjectured that $\Gamma(F)$ has positive Lebesgue measure  if  $\dim_H  ({\rm Proj}_x F) >1$. This condition  is optimal  in view of Talagrand's construction \cite{Talagrand}  of   a measure zero set which contains a circle centered at every point on a straight line. 
The conjecture was later proved by Wolff \cite{Wo00}.
In fact, a stronger result was obtained: 
For any  set $F\subset \mathbb R^2\times(0,\infty)$ satisfying $\dim_H F >1$ (not necessarily $\dim_H ({\rm Proj}_xF)>1$), any set containing $\Gamma(F)$ can not be of Lebesgue measure zero.

The main object of  this paper is to study an extension of the aforementioned problem to space curves in $\mathbb R^d$, $d\ge3$.
Similar problems for non-planar curves are less well understood (see \cite{PS,  KLO} for partial results).
We assume that $\gamma$ is nondegenerate, namely, 
\begin{equation}\label{torsion}
\det \begin{pmatrix} \gamma'(s) & \cdots & \gamma^{(d)}(s) \end{pmatrix} \neq 0, \quad \forall s\in I_\circ. 
\end{equation}
%As to be seen later, the nondegeneracy condition \eqref{torsion} is to be crucial in  obtaining \eqref{dimE} and we discuss it in detail in what follows. 

The next theorem is our first main result, which generalizes the circle packing problem to nondegenerate curves in $\mathbb R^d$, $d\ge3$.

\begin{thm}\label{thm-main}
Let $d \ge 3$ and $\gamma$ be a nondegenerate curve in $\mathbb R^d$. 
Suppose that a Borel set $F$ satisfies $\dim_H F > \alpha$ for some $0<\alpha \le d-1 $. Then, 
\begin{equation}\label{dimE} 
\dim_H E \ge \alpha+1
\end{equation}
holds whenever 
 a Borel set $E\subset \mathbb R^d$ contains $\Gamma(F)$. 
\end{thm}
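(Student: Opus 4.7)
I would argue by contradiction, exploiting duality between a Frostman measure on $F$ and a local smoothing estimate for averages over dilates of $\gamma$ relative to fractal measures. Suppose $E\supset\Gamma(F)$ is Borel with $\dim_H E<\alpha+1$. Pick $\epsilon>0$ small enough that $\dim_H E<\alpha+1-2\epsilon$ and $\dim_H F>\alpha+\epsilon$, and use Frostman's lemma to obtain a nonzero compactly supported measure $\mu$ on $F$ satisfying $\mu(B(z,r))\le r^{\alpha+\epsilon}$. Fix a nonnegative bump $\psi$ supported in $I$ with $\int\psi>0$ and set
$$A_tf(x):=\int f(x+t\gamma(s))\,\psi(s)\,ds.$$
Since $x+t\gamma(I)\subset E$ whenever $(x,t)\in F$, one has $A_t\mathbf 1_E(x)\gtrsim 1$ on $\supp\mu$, hence
$$\int A_t\mathbf 1_E(x)\,d\mu(x,t)\;\gtrsim\;\mu(F)\;>\;0.$$

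From $\mathcal H^{\alpha+1-\epsilon}(E)=0$, for any $\eta>0$ I would cover $E$ by balls $\{B_i\}$ with $\sum_i r(B_i)^{\alpha+1-\epsilon}<\eta$, and group them dyadically by scale so that $E_k:=\bigcup_{r(B_i)\simeq 2^{-k}}B_i$ satisfies $|E_k|\lesssim\eta\,2^{-k(d-\alpha-1+\epsilon)}$ and $\mathbf 1_E\le\sum_k\mathbf 1_{E_k}$. The key quantitative input is a local smoothing estimate relative to $\mu$ of the form
$$\|A_tf\|_{L^p(d\mu)}\;\lesssim\;2^{-k\beta}\,\|f\|_{L^p(\mathbb R^d)}\quad\text{for $f$ supported on a union of balls of radius $\simeq 2^{-k}$}$$
(after a standard Littlewood--Paley reduction to Fourier-localized pieces), valid for some $p>1$ and a smoothing exponent $\beta=\beta(p,\alpha,\epsilon)$ with $\beta+(d-\alpha-1+\epsilon)/p>0$. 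Combining via H\"older,
$$\mu(F)\;\lesssim\;\sum_k\mu(F)^{1-1/p}\,\|A_t\mathbf 1_{E_k}\|_{L^p(d\mu)}\;\lesssim\;\mu(F)^{1-1/p}\,\eta^{1/p}\sum_k 2^{-k(\beta+(d-\alpha-1+\epsilon)/p)}.$$
The series converges and $\eta>0$ is arbitrary, so $\mu(F)=0$, contradicting the Frostman construction.

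The principal obstacle in this plan is the local smoothing estimate against the fractal measure $\mu$ --- this is precisely the ``main ingredient'' flagged in the abstract. For planar circles it reduces to Wolff's circular maximal inequality; for nondegenerate curves in $\mathbb R^d$ with $d\ge3$, I would expect it to follow from sharp decoupling (or square-function) estimates for the extension operator associated with the conical surface $\{(s,\tau\gamma(s)):s\in I,\;\tau\simeq 1\}$, together with a fractal integration argument that converts the Frostman bound $\mu(B(z,r))\le r^{\alpha+\epsilon}$ into the gain factor $2^{-k\beta}$. The truly delicate range is the endpoint $\alpha=d-1$, where the helping factor $(d-\alpha-1+\epsilon)/p$ is essentially zero and one must extract $\beta>0$ purely from the fractal smoothing; this is exactly where the nondegeneracy hypothesis \eqref{torsion} on $\gamma$ must be used in full.
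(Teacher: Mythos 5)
Your high-level strategy matches the paper's: obtain a Frostman measure on $F$, observe that $\mathcal A_\gamma f\gtrsim1$ on its support whenever $f\ge\chi_E$, and play this off against a local smoothing estimate for $\mathcal A_\gamma$ relative to the fractal measure (the paper's Theorem~\ref{smoothing}, which it proves via the sharp $L^p(\mathbb R^{d+1})$ local smoothing of \cite{GWZ,KLO} plus Lemma~\ref{fractal}). The difference is in how one converts ``$\dim_HE$ small'' into smallness of the right-hand side. You propose a direct covering argument: dyadically group a cover of $E$ with $\sum r_i^{\alpha+1-\epsilon}<\eta$, write $\mathbf 1_E\le\sum_k\mathbf1_{E_k}$, and bound $\|A_t\mathbf1_{E_k}\|_{L^p(d\mu)}$ by a power of $2^{-k}$ times $\|\mathbf 1_{E_k}\|_p$. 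The paper instead invokes Bessel capacity (Theorem~\ref{capacity}, from Ziemer): from $\mathcal H^{d-\sigma p}(E)<\infty$ one gets $B^p_\sigma(E)=0$, i.e.\ there exist smooth $f\ge\chi_E$ with $\|f\|_{L^p_\sigma}$ arbitrarily small, and one plugs these directly into \eqref{avr}.

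There is a genuine gap in your covering route as stated. The paper's estimate \eqref{avr} is a bound against the $L^p_\sigma$ norm with $\sigma>(d-1-\alpha-\epsilon)/p>0$ for $\alpha<d-1$. To deduce your claimed form $\|A_t\mathbf1_{E_k}\|_{L^p(d\mu)}\lesssim 2^{-k\beta}\|\mathbf1_{E_k}\|_p$ one would need $\|\mathbf1_{E_k}\|_{L^p_\sigma}\lesssim 2^{k\sigma}\|\mathbf1_{E_k}\|_p$, but $\mathbf1_{E_k}$ is a characteristic function with slowly decaying high-frequency tails: for a single ball of radius $r$ one has $\mathbf1_B\notin L^p_\sigma$ once $\sigma\ge1/p$, and the Littlewood--Paley pieces $P_{2^j}\mathbf1_{E_k}$ for $j>k$ are not summable against the loss $2^{j\sigma}$ unless $\sigma<1/p$. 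Thus your argument as written only covers $\alpha>d-2$ (roughly), not the full range $0<\alpha\le d-1$. The fix is either to replace $\mathbf1_{E_k}$ by smooth majorants $\ge\chi_{E_k}$ adapted to scale $2^{-k}$ (so that high-frequency tails decay rapidly), or to appeal directly to the Bessel-capacity theorem as the paper does --- that theorem is precisely the clean packaging of the construction of such majorants, and it hands you smooth $f\ge\chi_E$ with $\|f\|_{L^p_\sigma}$ small without you ever having to touch characteristic functions. Once that is repaired, your plan and the paper's coincide.

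One small misdiagnosis at the end: at $\alpha=d-1$ the needed $\sigma$ only has to satisfy $\sigma>-\epsilon/p$, so it may be taken slightly negative; the argument does not need a genuine ``gain'' $\beta>0$, only a $\sigma$ strictly below $(d-1-\alpha+\epsilon)/p$, which exists as soon as Theorem~\ref{smoothing} gives the sharp threshold $(d-1-\alpha)/p$. The nondegeneracy/torsion hypothesis enters through Theorem~\ref{thm-ls} (sharp local smoothing in $L^p(\mathbb R^{d+1})$), which is the input to Theorem~\ref{smoothing}; it is needed throughout, not only at the endpoint.
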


In $\mathbb R^2$, K\"{a}enm\"{a}ki--Orponen--Venieri \cite{KOV} provided a geometric proof of \eqref{dimE} when $F $ is an analytic set. %See also \cite{O07} for a partial result. 

Theorem \ref{thm-main} is optimal in that $\dim_H(\Gamma(F)) \le \dim_H F+1$ for a Borel family $\Gamma(F)$, as can be easily seen by the standard covering argument.
The case that $\Gamma(F)$ has positive Lebesgue measure was of special interest.
In \cite[Corollary 1.6]{KLO}, it was shown that
$\Gamma(F)$ has positive Lebesgue measure if $\dim_H F >d-1$. Conversely, 
modifying  Kinney's example \cite{Ki}, one can show that the condition  $\dim_H F > d-1$ is necessary, in general,  for $\Gamma(F)$ to have positive Lebesgue measure. 
See Proposition \ref{BRK}. 

Our result also extends to curves of different type. 
We say that $\gamma$ is of finite type if for each $s \in I_\circ$, there exists a $d$-tuple of positive integers $(a_1,\dots, a_d)$ such that 
\[\det \begin{pmatrix} \gamma^{(a_1)}(s) & \cdots & \gamma^{(a_d)}(s) \end{pmatrix}  \neq 0.\] 
Theorem \ref{thm-main} also continues to hold for finite type curves, since a finite type curve contains a nondegenerate sub-curve.
See \cite{PS, KLO} for a previous result for finite type curves.   One can easily see that Theorem \ref{thm-main} does not hold in general if the finite type condition is not satisfied.
For example, consider the curve $\gamma$  contained in a $k$-dimensional linear subspace $V_k \subset \mathbb R^d$ for $2\le k \le d-1$ such that $\gamma'(s),\dots,\gamma^{(k)}(s)$ spans $V_k$ for all $s\in I_\circ$. 
For $\varepsilon \in (0,1/2)$, let  $F_k \subset V_k \times (0,\infty)$ 
such that $\dim_H F_k = k-1+ 2\varepsilon$. 
Then $\dim_H \Gamma(F_k) \le  k$ since $x_k + t\gamma(I) \subset V_k$ for any $(x_k,t)\in F_k$.
Let $G_k \subset V_k^\perp$ such that $\dim_H G_k =\overline{\dim}_BG_k= 1-\varepsilon$. 
Here, $\overline{\dim}_B$ denotes the upper box counting dimension. 
Clearly, $\dim_H (F_k + G_k) > k$.
If we take $F= F_k + G_k$, it follows that $\dim_H \Gamma(F)\le \dim_H \Gamma(F_k) + \dim_H {G_k} < k+ 1 $ even if $\dim_H F > k$.

\subsection*{Averages over curves} Our proof of Theorem \ref{thm-main} relies on local smoothing estimates  relative to fractal measures for averaging operators rather than geometric approach used in some of the previous works (see \cite{Wo97,Mitsis, Za, KOV}).

We consider an averaging operator given by 
\[
\mathcal A_\gamma f(x,t) =\chi(t) \int  f(x + t\gamma(s)) \psi(s)\,ds,
\]
where $\chi \in C_0^\infty((1/2,4))$ and $\psi$ is a nonnegative smooth function satisfying $\psi=1$ on $I$ and $\supp \psi \subset I_\circ$.

\begin{defn}
Let  $\mathbb B^{d} (z,\rho)$ denote  the ball of radius $\rho$ centered at $z$ in $\mathbb R^d$. 
For a non-negative Borel measure $\mu$ defined on $\R^{d}$ we say $\mu$ is  $\alpha$-dimensional  if  there is a constant $C_\mu$
 such that
\begin{equation*}%\label{measure}
\mu(  \mathbb B^{d}(z,\rho) ) \le C_\mu \rho^\alpha, \quad \forall  (z,\rho) \in \mathbb R^{d}\times \mathbb R^+
\end{equation*} 
for some $\alpha \in (0,d]$.  We denote by $\mathfrak C^{d}(\alpha)$ the set of all $\alpha$-dimensional measures in $\mathbb R^{d}$.
For $\mu \in \mathfrak C^{d}(\alpha)$, we  set
\[
\langle \mu\rangle_\alpha := \sup_{z\in \mathbb R^{d},\, \rho>0} \rho^{-\alpha} 
\mu(  \mathbb B^{d} (z,\rho) ). 
\]
\end{defn}

It is well-known that Marstrand's results \cite{Marstrand} can be deduced from $L^p$, $p\neq \infty$,  estimate for the circular maximal function.
In \cite{HKL}, some of the authors extended the $L^p$ circular maximal estimate to that relative to $\alpha$-dimensional measures, which recovers the aforementioned Wolff's result in \cite{Wo00}.
Moreover, $L^p$ maximal bound was extended to space curves (see \cite{PS,KLO1, BGHS, KLO}). 
Various forms of local smoothing estimates played important roles in proving those results. 
Similarly, to prove Theorem \ref{thm-main}, we make use of local smoothing estimate relative to $\alpha$-dimensional fractal measures on $\mathbb R^{d+1}$:
\begin{equation}\label{avr}
\|   \mathcal A_\gamma f \|_{L^p(d\mu)} \le C \langle \mu \rangle_\alpha^{\frac1p} \| f\|_{L^p_\sigma (\mathbb R^d)} .
\end{equation}
Here, $\|f\|_{L_\sigma^p(\mathbb R^d)} : = \| (1-\Delta)^{\frac \sigma 2}f\|_{L^p(\mathbb R^d)}$.
% and $\langle \mu \rangle_\alpha$ is defined below. 
Let us set $p(d) = 4$, if $d=2$, and $p(d) = 4d-2$, if $d\ge 3$. 

\begin{thm}\label{smoothing}
Let $d\ge 2$, $0<\alpha \le d+1$, and $\mu \in \mathfrak C^{d+1}(\alpha)$.
Suppose that $\gamma$ is a smooth curve satisfying \eqref{torsion}. If $p >p(d)$, then the estimate \eqref{avr}
holds for
\begin{align*} 
\sigma > (d-1-\alpha)/p .
\end{align*}
\end{thm}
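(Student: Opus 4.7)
The plan is to reduce Theorem \ref{smoothing} to a frequency-localized fractal estimate via Littlewood--Paley decomposition, and then to derive the single-frequency estimate by combining the sharp $L^p$ Lebesgue local smoothing inequality for $\mathcal A_\gamma$ with the $\alpha$-dimensional ball condition on $\mu$.

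Decompose $f = \sum_{k\ge 0} f_k$ with $\widehat{f_k}$ supported on $\{|\xi|\sim 2^k\}$ for $k\ge 1$ and on $\{|\xi|\lesssim 1\}$ for $k=0$. Invoking the Littlewood--Paley characterization $\|f\|_{L^p_\sigma} \sim \|(\sum_k 2^{2k\sigma}|f_k|^2)^{1/2}\|_{L^p}$ together with a square function argument (valid for $p > 2$), the theorem reduces to proving the frequency-localized estimate
\[
\|\mathcal A_\gamma f_k\|_{L^p(d\mu)} \;\lesssim\; \langle\mu\rangle_\alpha^{1/p}\, 2^{k(d-1-\alpha)/p + k\varepsilon}\, \|f_k\|_{L^p(\mathbb R^d)}
\]
for each $p > p(d)$ and $\varepsilon > 0$; summation in $k$ with weights $2^{-k\sigma}$ for $\sigma > (d-1-\alpha)/p$ then produces a geometric series and yields \eqref{avr}.

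For the single-frequency estimate, the principal input is the sharp Lebesgue local smoothing bound
\[
\|\mathcal A_\gamma f_k\|_{L^p(\mathbb R^{d+1})} \;\lesssim\; 2^{-2k/p + k\varepsilon}\, \|f_k\|_{L^p(\mathbb R^d)}, \qquad p > p(d),
\]
which for $d=2$ is the sharp cone multiplier estimate (culminating in the work of Guth--Wang--Zhang) and for $d\ge 3$ is the analogous sharp local smoothing for cones over nondegenerate curves established through the $\ell^p$-decoupling framework of Beltr\'an--Guo--Hickman--Seeger and collaborators. Granted this, I would convert to the fractal measure by exploiting the Fourier support of $\mathcal A_\gamma f_k$ in $\{|(\xi,\tau)|\lesssim 2^k\}$, which yields the pointwise majorization
\[
|\mathcal A_\gamma f_k(z)|^p \;\lesssim\; |\mathbb B^{d+1}(z,2^{-k})|^{-1}\!\int_{\mathbb B^{d+1}(z,2^{-k})}\!|\mathcal A_\gamma f_k(w)|^p\, dw \;+\; \text{(rapidly-decaying tail)},
\]
and then Fubini combined with the ball condition $\mu(\mathbb B^{d+1}(w,2^{-k})) \le \langle\mu\rangle_\alpha 2^{-k\alpha}$ gives
\[
\int |\mathcal A_\gamma f_k|^p\, d\mu \;\lesssim\; \langle\mu\rangle_\alpha\, 2^{k(d+1-\alpha)}\int|\mathcal A_\gamma f_k|^p\, dw.
\]
Plugging the sharp Lebesgue bound into the right-hand side produces exactly the factor $\langle\mu\rangle_\alpha\, 2^{k(d-1-\alpha) + k\varepsilon p}\, \|f_k\|_{L^p}^p$ required.

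The main obstacle is the sharp Lebesgue local smoothing bound itself, with the full gain $2/p$ rather than the classical $1/p$: this is a deep input that relies on polynomial partitioning in the case $d=2$, and on cinematic-curve / cone $\ell^p$-decoupling for $d\ge 3$. Once this input is granted, the remaining steps --- the pointwise majorization, Fubini with the ball condition, and the Littlewood--Paley summation --- are routine. A secondary technical point is that the Schwartz tail in the pointwise bound must be shown to be harmless against $d\mu$; this follows by applying the ball condition to annular shells $\{|z-w|\sim 2^{-k+j}\}$ and summing a geometrically convergent series in $j$, which contributes only an $O(1)$ multiplicative constant.
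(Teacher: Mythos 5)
Your proposal follows the same architecture as the paper's proof of Theorem~\ref{smoothing} (which is derived as the case $m=1$ of Theorem~\ref{m-smoothing}): Littlewood--Paley decompose $f$, pass from $L^p(d\mu)$ to $L^p(\mathbb R^{d+1})$ at each dyadic frequency $\lambda\sim 2^k$ using the $\alpha$-dimensional ball condition (this is exactly Lemma~\ref{fractal}, which you re-derive via the pointwise majorization), apply the sharp Lebesgue local smoothing bound of \cite{GWZ,KLO} (Theorem~\ref{thm-ls}) to pick up the factor $\lambda^{-2/p+\varepsilon}$, and sum. The exponent bookkeeping $(d+1-\alpha)/p-2/p=(d-1-\alpha)/p$ is correct, and the identification of the sharp Lebesgue local smoothing estimate as the key deep input is also correct.

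There is one genuine slip, which the paper addresses explicitly and you do not: the assertion that the space-time Fourier transform of $\mathcal A_\gamma f_k$ is supported in $\{|(\xi,\tau)|\lesssim 2^k\}$. This is false. Since $\chi\in C_0^\infty$ has compact $t$-support, one computes
\[
\mathcal F_{x,t}\big(\mathcal A_\gamma f_k\big)(\xi,\tau)=\widehat{f_k}(\xi)\int \widehat\chi\big(\tau-\gamma(s)\cdot\xi\big)\psi(s)\,ds,
\]
whose $\tau$-support is all of $\mathbb R$; the mass merely concentrates near $|\tau|\lesssim 2^k$. Without genuine compact Fourier support, the convolution identity $\mathcal A_\gamma f_k=(\mathcal A_\gamma f_k)*K_k$ (with $\widehat{K_k}=1$ on a ball of radius $\sim 2^k$) underlying your pointwise majorization does not hold as stated, so Lemma~\ref{fractal} cannot be applied directly to $\mathcal A_\gamma f_k$. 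The paper fixes this by writing $\mathcal A_\gamma=\mathcal A_\gamma'+\mathcal A_\gamma^e$, where $\mathcal A_\gamma'$ inserts a cutoff $\beta_0((C\lambda)^{-1}\tau)$ on the $\tau$-side and so is genuinely frequency-localized to a ball of radius $O(\lambda)$, while $\mathcal A_\gamma^e$ has a kernel bounded by $\lambda^{-N}(1+|x|)^{-N}(1+|t|)^{-N}$ for every $N$ (by nonstationary phase, since the $\tau$-frequency is off the characteristic set) and is dispatched by Schur's test. Your "secondary technical point" about Schwartz tails concerns the physical-space decay of $K_k$, which is the standard and unproblematic half of the matter; it does not address the $\tau$-side frequency-localization issue. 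With the extra decomposition inserted, your argument closes and coincides with the paper's.
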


Considering a specific function and $\alpha$-dimensional measure, one can show that $\sigma \ge (d-1-\alpha)/p$ is optimal for \eqref{avr} to hold.   
See Proposition \ref{opt} with $m=1$.

\subsection*{Multi-parameter dilation}  The approach for curves can be further generalized to other submanifolds via the corresponding local smoothing estimates for the associated averaging operators. 
In particular, we can extend Theorem \ref{thm-main} to families of  curves generated by multi-parameter dilations.

Let $1 \le m \le d$ and  denote $\mathbf t= (t_1,\dots,t_m)$, $t_i \in (1,2)$.  For an onto map   $\omega$ from $\{1 ,\dots,d \}$ to $\{1, \dots, m\}$, 
we define a family of nondegenerate curves $\gamma_{\mathbf t}^\omega
$ generated by $m$ parameters by
\begin{align}\label{m-curve}
\gamma_{\mathbf t}^\omega(s)
= (t_{\omega(1)}\gamma_1(s),\dots, t_{\omega(d)} \gamma_d(s)).
\end{align}
For $F' \subset \mathbb R^d\times (1,2)^m$, we obtain a lower bound of Hausdorff dimension of 
\[
\Gamma^\omega(F')  =  \bigcup_{ (x, \mathbf t)    \in F'} \big( x+ \gamma_{\mathbf t}^\omega(I) \big) .   
\]

\begin{thm}\label{m-para} Let $d\ge2$, $2\le m\le d$,  and  $  0  <\alpha \le  d+m-2 $. Suppose $F' \subset \mathbb R^d\times (1,2)^m$ is a Borel set with $\dim_H F' >\alpha$.
Then, 
\begin{equation}\label{low-G}
\dim_H E \ge \max \{\alpha +2 - m, 0\}
\end{equation}
holds whenever  a Borel set $E$ contains  $\Gamma^\omega(F')$. 
\end{thm}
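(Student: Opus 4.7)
The plan is to mirror the derivation of Theorem \ref{thm-main} from Theorem \ref{smoothing}, with a multi-parameter analog of the local smoothing estimate replacing Theorem \ref{smoothing}. Define
\[
\mathcal A^\omega_\gamma f(x,\mathbf t) = \chi(\mathbf t) \int f(x + \gamma^\omega_{\mathbf t}(s))\, \psi(s)\, ds
\]
with $\chi$ a smooth cutoff supported near $(1,2)^m$. The key ingredient to establish is the bound
\[
\|\mathcal A^\omega_\gamma f\|_{L^p(d\mu)} \le C \langle \mu\rangle_\alpha^{1/p} \|f\|_{L^p_\sigma(\mathbb R^d)}, \qquad \mu \in \mathfrak C^{d+m}(\alpha),
\]
for $p > p(d)$ and $\sigma > (d + m - 2 - \alpha)/p$. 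This specializes to Theorem \ref{smoothing} when $m = 1$, and the exponent is calibrated so that the Sobolev loss $p\sigma$ matches the dimension gap $d - (\alpha + 2 - m)$ needed for the application.

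Granting this smoothing estimate, Theorem \ref{m-para} is deduced exactly as for Theorem \ref{thm-main}. It suffices to treat $\alpha > m - 2$, since $\max\{\alpha + 2 - m, 0\} = 0$ otherwise. Fix a Borel $E \supset \Gamma^\omega(F')$ and, by Frostman's lemma, pick a probability measure $\mu$ supported on $F'$ with $\langle \mu\rangle_{\alpha'} < \infty$ for some $\alpha < \alpha' < \dim_H F'$. The pushforward
\[
\nu := \pi_*(\psi(s)\, ds \otimes d\mu), \qquad \pi(s, x, \mathbf t) = x + \gamma^\omega_{\mathbf t}(s),
\]
is a nontrivial measure supported inside $\Gamma^\omega(F') \subset E$, so it suffices to lower bound $\dim_H \supp \nu$. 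The smoothing bound against $\mu$, dualized and combined with the standard Fourier-analytic passage from fractal $L^p$ inequalities to Hausdorff-dimension bounds (as in the proof of Theorem \ref{thm-main}), yields $\dim_H \supp \nu \ge \alpha + 2 - m$ and hence $\dim_H E \ge \alpha + 2 - m$.

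The main obstacle is establishing the multi-parameter smoothing estimate. The Fourier multiplier of $\mathcal A^\omega_\gamma$ has oscillatory phase $\xi \cdot \gamma^\omega_{\mathbf t}(s)$ in the $(m+1)$-dimensional parameter $(s, \mathbf t)$, and the enlarged nondegeneracy contributed by the $m$-parameter dilation must be exploited to produce the extra $(m-1)/p$ smoothing beyond Theorem \ref{smoothing}. A natural route is to fix $m-1$ of the $t_i$'s, apply Theorem \ref{smoothing} fiberwise to the resulting single-parameter operator, and assemble via a Minkowski-type inequality in the remaining $\mathbf t$-variables. The delicate point is handling the fractal measure $\mu$ under slicing---an $\alpha$-dimensional measure on $\mathbb R^{d+m}$ need not have uniformly $(\alpha-m+1)$-dimensional slices---so either a fractal slicing theorem must be invoked or one resorts to a direct $(m+1)$-parameter wave-packet/decoupling analysis adapted to the curve family $\gamma^\omega_{\mathbf t}$.
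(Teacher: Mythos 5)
Your high-level outline of the reduction is correct and matches the paper: you correctly identify that the key ingredient is a multi-parameter fractal local smoothing bound
\[
\|\mathcal A^\omega_\gamma f\|_{L^p(d\mu)} \le C \langle \mu\rangle_\alpha^{1/p}\|f\|_{L^p_\sigma(\mathbb R^d)},\qquad \sigma > (d+m-2-\alpha)/p,\ \mu\in\mathfrak C^{d+m}(\alpha),
\]
and that, once this is in hand, the dimension bound follows by the same Bessel-capacity argument (Proposition \ref{prop}) used for Theorem \ref{thm-main}. That part is fine, modulo a mild framing issue: the paper does not push forward a measure onto $\Gamma^\omega(F')$ and lower-bound $\dim_H\supp\nu$; it argues directly that $B^p_\sigma(E)>0$, hence $\mathcal H^{d-\sigma p}(E)=\infty$. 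Your pushforward picture is a heuristic, and to make it rigorous you would end up reproducing essentially that capacity argument anyway.

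The genuine gap is that you leave the multi-parameter smoothing estimate unproven, and the route you propose for it is not the right one. Two points. First, there is no ``extra $(m-1)/p$ smoothing'' coming from multi-parameter dilation: the $L^p(\mathbb R^{d+m})\to L^p$ smoothing order remains exactly $-2/p$ for every $m$ (see the remark after Theorem \ref{m-smoothing}). What changes with $m$ is not the smoothing but the exponent $\lambda^{(n-\alpha)/p}$ with $n=d+m$ that comes out of Lemma \ref{fractal}, and it is this factor that produces $(n-\alpha-2)/p=(d+m-\alpha-2)/p$. Second, you correctly suspect that slicing an $\alpha$-dimensional measure $\mu$ on $\mathbb R^{d+m}$ in $m-1$ of the $t$-variables is dangerous — and indeed it would fail — but the paper never does it. The order of operations is the opposite of what you propose: after a frequency localization to ensure the space-time Fourier support of $\mathcal A^\omega_\gamma P_\lambda f$ is contained in a ball of radius $\sim\lambda$ (splitting off a rapidly decaying error), Lemma \ref{fractal} is applied \emph{first}, converting the $L^p(d\mu)$ norm to $\lambda^{(n-\alpha)/p}\|\mathcal A^\omega_\gamma P_\lambda f\|_{L^p(\mathbb R^{n})}$. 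Only then does one slice — but now with respect to Lebesgue measure — via Fubini and the change of variables $(t_1,t_2,\dots,t_m)\mapsto(t_1,t_1t_2,\dots,t_1t_m)$, which reduces the inner integral in $(x,t_1)$ to the single-parameter estimate of Theorem \ref{thm-ls} uniformly in the remaining $t_j$'s. This sidesteps the fractal slicing problem entirely, and no decoupling or wave-packet analysis beyond the single-parameter case is required. As written, your proposal does not contain this step, and without it the argument does not close.
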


When $d=3$ and $m=2$, a special case was considered by  
Pramanik and Seeger \cite{PS}. It was shown  (\cite[Proposition 6.1]{PS})  that the 
union of a two-parameter family of helices $\gamma_{t_1,t_2}(s) = (t_1 \cos(2\pi s), t_1 \sin(2\pi s), t_2 s ) $ has Hausdorff dimension at least $8/3$.
Theorem \ref{m-para}, in particular, proves the optimal result  that  the union of $\gamma_{t_1,t_2}$ has Hausdorff dimension $3$. 
More generally, note that $\dim_H E = d$ if $\dim_H F' > d + m - 2$.  Furthermore, from Theorem \ref{m-smoothing}, one can show $E$ is of positive Lebesgue measure  if $\dim_H F' > d + m - 2$ (see Remark \ref{rmk} in p. 8). 
This is sharp in that, for $1\le m \le d$, there exists a set $E$ of Lebesgue measure zero containing $\Gamma^\omega(F')$ generated by a nondegenerate  curve $\gamma$ while $\dim_H F' = d+m-2$ (see Proposition \ref{BRK}).

\subsection*{Relation to local smoothing estimates for averages}
For $2 \le k \le d-1$, one may consider a similar packing  problem with $k$-dimensional submanifolds  under a suitable curvature condition.  
The problem for the case $k=d-1$ which includes spheres and hyperplanes is relatively well known 
(see \cite{KW, Mitsis, O06, O06', O07}).

Let $\mathfrak S\subset \mathbb R^d$ be a smooth compact $k$-dimensional submanifold.  Let $d\mathbf m$ denote the surface measure on $\mathfrak S$ and consider 
\[
\mathbf A f(x,t)= \chi(t)\int_{\mathfrak S} f(x + ty) \,d\mathbf m(y). 
\]
It is known that 
the averaging operator has smoothing properties under a suitable curvature condition on $\mathfrak S$. There are various model 
cases for which     $f\to \mathbf A f(\cdot,t)$ with a fixed $t\neq 0$ is  bounded from $L^p(\mathbb R^d)$  to $L_\sigma^p(\mathbb R^d)$ for some $\sigma<0$  and the best possible  regularity exponent is $\sigma=-k/p$ (e.g., see \cite{M, SSS, BGHS2, KLO}).  Taking additional integration over $t$ allows an extra regularity gain on a certain range of $p$. This phenomenon is known as \emph{local smoothing}.  It is not difficult to see the smoothing order can not  exceed $ (k+1)/p$ (e.g., see Section \ref{sec4.2}).  We refer to as sharp local smoothing estimate the  following:
\begin{equation}\label{submanifold}
\| \mathbf A f \|_{L^p (\mathbb R^{d+1})} \lesssim \|f\|_{L^p_\sigma(\mathbb R^d)}, ~\quad \sigma >- (k+1)/p. 
\end{equation}
If $k=1$, \eqref{submanifold} has been established in \cite{PS, GWZ, KLO} under the assumption that the curve   satisfies \eqref{torsion}.
For $k=d-1\ge 2$, the estimate \eqref{submanifold} is equivalent to Sogge's local smoothing conjecture when $\mathfrak S$ is strictly convex (\cite{BD15}) and 
similar results are also known when $\mathfrak S$ has non-vanishing gaussian (\cite{BD17, BHS}) with principal curvatures of different signs.

We now intend to draw a connection between the sharp local estimate \eqref{submanifold} and  the problem determining dimension of  union of $(x+t\mathfrak S)$. 
The following gives optimal lower bounds on dimension of  union of $(x+t\mathfrak S)$. It can be shown by adapting the proof of Theorem \ref{thm-main}, so we state it without proof.

\begin{prop} Let $1\le k\le d-1$. Suppose that  \eqref{submanifold} holds for some $p\neq \infty$. 
Suppose that a Borel set  $F\subset \mathbb R^d\times (0,\infty)$ satisfies $\dim_H F > \alpha$ for some $0<\alpha \le d-k $. Then, 
if  a Borel set $E\subset \mathbb R^d$ contains 
\[ \bigcup_{(x,t) \in F} (x+t\mathfrak S),\]
then $\dim_H  E\ge  k+\alpha$.  
\end{prop}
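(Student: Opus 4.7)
The plan is to mimic the proof of Theorem~\ref{thm-main}: upgrade the sharp local smoothing estimate \eqref{submanifold} to a version relative to $\alpha$-dimensional fractal measures on $\mathbb R^{d+1}$, then feed this fractal estimate into a Frostman-measure/covering argument on $F$ and $E$. The first task is to establish the analog of Theorem~\ref{smoothing}: for every $\mu \in \mathfrak C^{d+1}(\alpha)$,
\[
\|\mathbf A f\|_{L^p(d\mu)} \lesssim \langle \mu \rangle_\alpha^{1/p} \|f\|_{L^p_\tau(\mathbb R^d)}, \qquad \tau > (d - k - \alpha)/p.
\]
Following the scheme used in the proof of Theorem~\ref{smoothing}, this should be obtained by Littlewood--Paley decomposing $\mathbf A f$: at each dyadic frequency scale $|\xi|\asymp 2^j$ one invokes \eqref{submanifold}, then trades the Lebesgue $L^p(\mathbb R^{d+1})$ norm for $L^p(d\mu)$ via a Bernstein-type loss of $\langle\mu\rangle_\alpha^{1/p} 2^{j(d+1-\alpha)/p}$; the $(k+1)/p$ smoothing gain in \eqref{submanifold} makes the geometric series in $j$ converge precisely in the range $\tau > (d-k-\alpha)/p$.

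For the main argument, suppose toward a contradiction that $\dim_H E < k + \alpha$ and pick $s \in (\dim_H E,\, k+\alpha)$. For arbitrary $\eta, \delta_0 > 0$, choose a cover $\{B(z_j, r_j)\}$ of $E$ with $r_j \le \delta_0$ and $\sum_j r_j^s < \eta$. A dyadic pigeonhole on the radii (costing only a logarithmic factor) restricts attention to a subfamily at a single scale $r_j \asymp \delta$ for some $\delta \le \delta_0$, covering a portion of $E$ that still carries a definite fraction of the relevant mass below; write $N_\delta$ for the number of balls in that subfamily, so $N_\delta \delta^s \lesssim \eta$. By Frostman's lemma, since $\dim_H F > \alpha$ there is a Borel probability measure $\mu$ supported on $F$ with $\mu \in \mathfrak C^{d+1}(\alpha)$; after a standard localization we may assume the $t$-projection of $\supp \mu$ lies in $\supp \chi$. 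Let $\phi_j$ be smooth bumps with $\phi_j \ge \chi_{B(z_j, r_j)}$ and $\supp \phi_j \subset B(z_j, 2 r_j)$, and set $f = \sum_j \phi_j$.

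For $(x,t) \in \supp \mu$, the inclusion $x + t\mathfrak S \subset E \subset \bigcup_j B_j$ forces $\mathbf A f(x,t) \gtrsim 1$, and the fractal smoothing then yields $1 \lesssim \langle \mu \rangle_\alpha \|f\|_{L^p_\tau}^p$. Since all $\widehat{\phi_j}$ are essentially supported in $\{|\xi| \lesssim \delta^{-1}\}$ and the $\phi_j$ have bounded overlap, $\|f\|_{L^p_\tau}^p \lesssim N_\delta \delta^{d-\tau p}$, whence $N_\delta \delta^{d-\tau p} \gtrsim 1$. Choosing $\tau$ in the interval $\bigl((d-k-\alpha)/p,\, (d-s)/p\bigr)$ (which is nonempty precisely because $s < k+\alpha$) one has $d-s-\tau p > 0$, so
\[
\eta \gtrsim N_\delta \delta^s \gtrsim \delta^{-(d-s-\tau p)} \to \infty \quad \text{as } \delta \to 0,
\]
a contradiction. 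Hence $\dim_H E \ge k + \alpha$.

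The principal difficulty lies in the first step: rigorously deriving the fractal local smoothing from \eqref{submanifold} requires the frequency-localized wave-packet analysis used in the proof of Theorem~\ref{smoothing} and in \cite{HKL}, not a direct Sobolev embedding; handling the behaviour near $\tau = (d-k-\alpha)/p$ and replacing naive triangle inequalities by a Littlewood--Paley square-function estimate are the main technical hurdles. A secondary but routine point is the subadditivity bound $\|\sum_j \phi_j\|_{L^p_\tau}^p \lesssim N_\delta \delta^{d-\tau p}$, justified by freezing the Fourier support at scale $\delta^{-1}$ and combining bounded overlap of the supports with a Mikhlin-type multiplier estimate.
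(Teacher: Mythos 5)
Your first step---upgrading \eqref{submanifold} to a fractal-measure estimate with exponent $\tau>(d-k-\alpha)/p$ via Littlewood--Paley decomposition, a cutoff in the time frequency, Lemma \ref{fractal}, and \eqref{submanifold}---is precisely the route the paper takes (it is the proof of Theorem \ref{m-smoothing} transposed to $\mathbf A$), and your exponent arithmetic is correct. Where you diverge is the second step. The paper's argument (Proposition \ref{prop}) goes through Bessel capacity: if $\dim_H E< k+\alpha$ then by Theorem \ref{capacity} one has $B_\sigma^p(E)=0$ for a suitable $\sigma$, so there are test functions $f\in C_0^\infty$ with $f\ge\chi_E$ and $\|f\|_{L^p_\sigma}$ arbitrarily small; plugging any such $f$ into the fractal estimate gives $\mu(F)^{1/p}\lesssim\|f\|_{L^p_\sigma}\to 0$, a contradiction, with no pigeonholing and no restriction of $f$ to a single scale. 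The capacity route is tidier because it quantifies over \emph{all} admissible $f\ge\chi_E$, and the multi-scale bookkeeping you are doing by hand is packaged inside the capacity-Hausdorff-measure comparison.

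The covering argument you propose instead is standard, but as written it has a gap in the pigeonhole. Once you restrict to the subfamily of balls with $r_j\asymp\delta$, that subfamily no longer covers $E$; so the sentence claiming ``$x+t\mathfrak S\subset E\subset\bigcup_j B_j$ forces $\mathbf A f(x,t)\gtrsim 1$'' is incompatible with defining $f=\sum_j\phi_j$ over the pigeonholed subfamily. If $f$ sums only over the single-scale balls the inclusion is false; if it sums over the full cover the bound $\|f\|_{L^p_\tau}^p\lesssim N_\delta\,\delta^{d-\tau p}$ fails. The fix is the usual double pigeonhole: for each $(x,t)\in\supp\mu$ there is a scale $\delta(x,t)$ at which balls of that scale capture a $\gtrsim 1/\log(1/\delta_0)$ fraction of the surface measure of $x+t\mathfrak S$; then pigeonhole over $(x,t)$ to find a common scale $\delta$ and a set of $\mu$-measure $\gtrsim\mu(F)/\log(1/\delta_0)$ on which $\mathbf A f(x,t)\gtrsim 1/\log(1/\delta_0)$ with $f$ built only from scale-$\delta$ bumps. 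The extra logarithmic factors do not hurt because $d-s-\tau p>0$, but the two-stage pigeonhole (over the surface measure, then over $(x,t)$) has to be made explicit for the argument to close.
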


\subsection*{Generalized averaging operators}
Theorem \ref{thm-main} can also be extended to an analogous result for unions of variable curves in $\mathbb R^2$,
and more generally variable hypersurfaces in $\mathbb R^d$, $d\ge2$.

Let $U_\circ, V_\circ\subset \mathbb R^d$ be small open neighborhoods of $x_\circ$ and $y_\circ$ in $\mathbb R^d$, respectively.
Also, let $\Phi_t \in C^\infty( U_\circ \times V_\circ)$ vary smoothly in the parameter $t \in J_\circ := [1,2]$.
We set $U\subset\subset U_\circ$, $V \subset \subset V_\circ$, and $ J \subset \subset  J_\circ$.
For  $(x,t)\in U\times J$, we consider a family of variable hypersurfaces given by
\[
\mathcal G_{x,t}=\{ y\in V: \Phi_t(x,y)= 0 \}.
\]
We assume that $\Phi_{t}$ satisfies the rotational curvature condition: 
\begin{align}\label{S1}
\det
\begin{pmatrix}
\Phi_t 
& \partial_y \Phi_t \\
\partial_x \Phi_t & \partial_{xy}^2 \Phi_t
\end{pmatrix}
\neq0
\qquad \text{where} \quad \Phi_t(x,y)=0 .
\end{align}
Then, $\partial_y\Phi_t\neq0$ for any $(x,y) \in U_\circ\times V_\circ$, and $\mathcal G_{x,t}$ is a smooth embedded hypersurface in $\mathbb R^d$.
Furthermore, there exists a homogeneous function $q$ of degree $1$ in $\xi$ such that
$q$ is smooth if $\xi\neq0$ and satisfies
$q(x,t, \partial_x \Phi_t(x,y)) = \partial_t \Phi_t(x,y)$ whenever $\Phi_t(x,y)=0$
 (see \cite{Sogge2} for detailed construction of $q$).  We assume that
\Be\label{S2}
\textrm{rank}\, \partial_{\xi \xi}^2 q(x,t,\xi) =d-1~\text{ for }~  (x,t,\xi)\in U_\circ\times J_\circ\times (\mathbb R^d\setminus \{0\}).
\Ee
We say that $\Phi_t$ satisfies cinematic curvature condition if \eqref{S1} and \eqref{S2} hold.

Proving a sharp estimate for a maximal average over a neighborhood of $\mathcal G_{x,t}$ in $\mathbb R^2$,
Zahl \cite{Za} showed that a Borel set containing curves $\mathcal G_{x,t}$ for every $0<t<1$ has Hausdorff dimension $2$ (see also \cite{KW}). 
By utilizing the local smoothing estimates in \cite{GLMX, BHS}, we obtain the following.

\begin{thm}\label{vari}
Let $d\ge2$ and $0<\alpha < 1$. Suppose $F\subset U\times J$ is a Borel set satisfying $\dim_H F>\alpha$ and  $\Phi_t$ satisfies the cinematic curvature condition. Then a Borel set containing $\mathcal G_{x,t}$
for $(x,t)\in F$ has Hausdorff dimension at least $d-1+\alpha$.
\end{thm}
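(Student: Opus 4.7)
The plan follows the scheme used for Theorem \ref{thm-main}: pair a Frostman-type measure on $F$ with a fractal local smoothing estimate for the averaging operator
\[
\mathbf{A} f(x,t) = \int_{\mathcal{G}_{x,t}} f(y) \, d\mathbf{m}_{x,t}(y)
\]
associated to the variable hypersurface family, with an appropriate smooth cutoff so that $(x,t)\in U\times J$. Via Frostman's lemma, for any $\alpha' \in (\alpha, \dim_H F)$ one obtains a nonzero measure $\mu \in \mathfrak{C}^{d+1}(\alpha')$ supported on $F$; it therefore suffices to show $\dim_H E \ge d-1+\alpha'$ and send $\alpha' \to \alpha^+$.

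The analytic input is the fractal local smoothing bound
\begin{equation}\label{vari-sm}
\| \mathbf{A} f \|_{L^p(d\mu)} \lesssim \langle \mu \rangle_{\alpha'}^{1/p} \| f \|_{L^p_\sigma(\mathbb{R}^d)}, \qquad \sigma > (1-\alpha')/p,
\end{equation}
in a suitable range of $p$. This is derived from the sharp non-fractal local smoothing $\|\mathbf{A} f\|_{L^p(\mathbb{R}^{d+1})} \lesssim \|f\|_{L^p_\tau}$ with $\tau > -d/p$, which holds under cinematic curvature by \cite{GLMX, BHS}, combined with the Bernstein-type embedding
\[
\|g\|_{L^p(d\mu)} \lesssim \lambda^{(d+1-\alpha')/p} \, \langle \mu \rangle_{\alpha'}^{1/p} \, \|g\|_{L^p(\mathbb{R}^{d+1})}
\]
valid for $g$ frequency-supported on $|\eta|\lesssim \lambda$. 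Applying these to each Littlewood--Paley piece of $\mathbf{A} f$ and summing over dyadic $\lambda$ produces \eqref{vari-sm}, with the $-d/p$ gain absorbing the $(d+1)/p$ loss to leave $(1-\alpha')/p+\epsilon$.

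Suppose toward contradiction that $\dim_H E < d-1+\alpha'$, and fix $\eta > 0$ with $\dim_H E < d-1+\alpha'-2\eta$. For arbitrary $\epsilon > 0$, cover $E$ by balls $\{B(y_i,r_i)\}_{i\in \mathcal{I}}$ with $\sum_i r_i^{d-1+\alpha'-2\eta} < \epsilon$. Grouping dyadically by $r_i \sim 2^{-k}$ and setting $E_k=\bigcup_{i\in\mathcal{I}_k} B(y_i, 2^{-k})$, one has $|E_k|\lesssim \epsilon\cdot 2^{-k(1-\alpha'+2\eta)}$. Since $\mathcal{G}_{x,t}\subset \bigcup_k E_k$ for $(x,t)\in F$, pigeonholing against $\sum_k k^{-2}<\infty$ yields $F\subset \bigcup_k F_k$ with $F_k=\{(x,t)\in F:\mathbf{A}\chi_{E_k}(x,t)\gtrsim k^{-2}\}$. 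Replacing $\chi_{E_k}$ by a smooth bump $\tilde\chi_{E_k}$ at scale $2^{-k}$ (so that $\|\tilde\chi_{E_k}\|_{L^p_\sigma}\lesssim 2^{k\sigma}|E_k|^{1/p}$) and invoking \eqref{vari-sm} with $\sigma=(1-\alpha')/p+\eta/p$ gives $\mu(F_k) \lesssim \epsilon\cdot k^{2p}\cdot 2^{-k\eta}\,\langle\mu\rangle_{\alpha'}$. Summing in $k$ produces $\mu(F)\lesssim \epsilon\langle\mu\rangle_{\alpha'}$, and sending $\epsilon\to 0$ contradicts $\mu(F)>0$.

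The principal obstacle is the derivation of \eqref{vari-sm} with the sharp fractal exponent $(1-\alpha')/p$. Unlike the translation-dilation setting of Theorem \ref{thm-main}, $\mathbf{A}$ is a genuine variable-coefficient Fourier integral operator, so one must apply the $L^p(\mathbb{R}^{d+1})$ smoothing of \cite{GLMX, BHS} scale-by-scale via a Littlewood--Paley decomposition, track $\langle\mu\rangle_{\alpha'}$ explicitly through the Bernstein embedding, and verify that the exponents combine to the claimed threshold across the $p$-range where the non-fractal smoothing is available.
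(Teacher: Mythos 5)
Your overall scheme matches the paper's: derive a fractal local smoothing bound for the variable-coefficient averaging operator and then convert it into a dimension lower bound. The two halves merit separate comments.

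For the fractal local smoothing estimate (your \eqref{vari-sm}, the paper's Corollary \ref{vari frac}), your derivation is on the right track but passes over the one nontrivial point. The Bernstein-type embedding (Lemma \ref{fractal} in the paper) requires the test function to have compactly supported space-time Fourier transform, and $\mathbf{A} P_\lambda f$ does \emph{not} have $(x,t)$-Fourier support inside $|\eta|\lesssim\lambda$: the symbol $\mathfrak a(x,t,\xi)$ and the phase $\phi(x,t,\xi)$ are only compactly supported in $(x,t)$, which smears the $(\zeta,\tau)$-support over all of $\mathbb R^{d+1}$. The paper resolves this by splitting $\widetilde{\mathfrak A}P_\lambda f = \widetilde{\mathfrak A}'P_\lambda f + \widetilde{\mathfrak A}^e P_\lambda f$, showing the tail piece $\widetilde{\mathfrak A}^e P_\lambda f$ decays like $\lambda^{-N}$ by repeated integration by parts (since $|\partial^{\kappa_1}_{x,t}\partial^{\kappa_2}_\xi\phi|\lesssim \lambda^{1-|\kappa_2|}$, the kernel is rapidly decaying when $|(\zeta,\tau)|\gtrsim\lambda$), and applying Lemma \ref{fractal} only to the genuinely localized piece. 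You flag this as ``the principal obstacle'' but don't carry it out; without the tail estimate the application of the Bernstein embedding is not justified as written, so this is a real gap (albeit one you identified).

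For the passage from the smoothing estimate to the dimension bound, you take a genuinely different route. The paper (Proposition \ref{prop}) invokes Bessel capacity and the theorem of Ziemer relating $B^p_\sigma(E)=0$ to $\mathcal H^{d-\sigma p+\epsilon}(E)=0$: any $C^\infty_0$ function $f\ge\chi_E$ gives $\mu(F_1)^{1/p}\lesssim\|f\|_{L^p_\sigma}$, and taking the infimum over such $f$ yields a contradiction with $B^p_\sigma(E)=0$. You instead argue by covering $E$ with balls, grouping dyadically into $E_k$ with $|E_k|\lesssim\epsilon\,2^{-k(1-\alpha'+2\eta)}$, pigeonholing against $\sum k^{-2}$ to produce sub-level sets $F_k$ where $\mathbf A\chi_{E_k}\gtrsim k^{-2}$, mollifying at scale $2^{-k}$ to get $\|\widetilde\chi_{E_k}\|_{L^p_\sigma}\lesssim 2^{k\sigma}|E_k|^{1/p}$, and summing. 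I checked the arithmetic: with $p\sigma=(1-\alpha')+\eta$ the dyadic factors give $2^{-k\eta}$, and $\sum_k k^{2p}2^{-k\eta}$ converges, so $\mu(F)\lesssim_\eta\epsilon\langle\mu\rangle_{\alpha'}\to 0$, giving the contradiction. This is correct and arguably more self-contained than the Bessel-capacity route, at the cost of a slightly longer argument. The paper's approach factors the bookkeeping into a black-box capacity theorem and reuses Proposition \ref{prop} verbatim across Theorems \ref{thm-main}, \ref{m-para}, and \ref{vari}; yours is more elementary and makes the dyadic accounting explicit, which some readers may prefer.

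One small point: Frostman's lemma produces a nonzero $\alpha'$-dimensional measure supported on a \emph{compact subset} $F_1\subset F$, not on $F$ itself; the paper is careful to pass to such $F_1$ and you should be too, though this doesn't change anything downstream.
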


\subsubsection*{Organization of the paper}
In Section \ref{thm-proof}, we prove Theorem \ref{thm-main} and \ref{m-para} by using local smoothing type estimates and Bessel capacity. 
In Section \ref{variable}, we provide a proof of Theorem \ref{vari}. 
Finally, we discuss sharpness of Theorem \ref{m-para} and \ref{m-smoothing} in Section \ref{sharpness}.

\section{Proof of Theorem \ref{thm-main} and \ref{m-para}}
\label{thm-proof}
In this section we prove Theorem \ref{thm-main} and \ref{m-para} by applying Theorem \ref{smoothing} combined with Proposition \ref{prop}  which shows  that Hausdorff dimension of unions of curves can be determined in terms of the regularity of the smoothing estimate  \eqref{avr}.

\subsection{Local smoothing estimates relative to fractal measure}
In this section, we prove Theorem \ref{smoothing}.
In fact, we prove a slightly general result considering multi-parameter dilation.
Let $1 \le m \le d$ and $\mathbf t =(t_1,\dots,t_m)$.
Recall that $\gamma_{\mathbf t}^\omega$ is given by \eqref{m-curve}.
We consider
\[\mathcal A_{\gamma}^\omega f (x, \mathbf t) = \prod_{j=1}^m \chi( t_j) \int f(x +\gamma_{\mathbf t}^\omega(s))\psi(s)\,ds \]
where $  \chi \in C_0^\infty((1/2,4))$ such that $ \chi=1$ on $[1,2] $ and $\psi$ is a real valued smooth function supported in $I_\circ$.  
Theorem \ref{smoothing} follows from the next theorem with $m=1$. 
\begin{thm}\label{m-smoothing}
Let $d\ge 2$ and $1\le m \le d$. Also let $0<\alpha \le n:= d+m$ and $\mu \in \mathfrak C^{n}(\alpha)$.
Suppose that $\gamma$ is a smooth curve satisfying \eqref{torsion}. If $p >p(d)$, then 
\begin{equation}\label{m-ls}
\big\|  \mathcal A_{\gamma}^\omega   f \big\|_{L^p(\mathbb R^{n},\, d\mu)} \le C\langle \mu \rangle_\alpha^{\frac 1p}   \|f\|_{L_\sigma^p} 
\end{equation}
for $\sigma > ( n -\alpha-2)/p$. 
\end{thm}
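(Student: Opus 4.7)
The strategy is to dyadically decompose in frequency and then combine the sharp ordinary-$L^p$ local smoothing with a fractal-to-Lebesgue conversion exploiting Fourier localization. First, I would write $f=\sum_{\lambda}f_{\lambda}$ with $\widehat{f_{\lambda}}$ supported in $\{|\xi|\sim\lambda\}$ for dyadic $\lambda\ge 1$ (plus a harmless low-frequency piece). Using the Littlewood--Paley characterization of $\|\cdot\|_{L^p_\sigma}$ together with the $\varepsilon$ room in $\sigma>(n-\alpha-2)/p$, the theorem reduces to showing, for each dyadic $\lambda\ge 1$ and any $\varepsilon>0$,
\[
\|\mathcal A_\gamma^\omega f_\lambda\|_{L^p(d\mu)}\lesssim_\varepsilon \lambda^{(n-\alpha-2)/p+\varepsilon}\langle\mu\rangle_\alpha^{1/p}\|f_\lambda\|_{L^p(\mathbb R^d)}.
\]

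For the fractal-to-Lebesgue step, note that the phase $\gamma_{\mathbf t}^\omega(s)\cdot\xi$ is linear in each $t_k$ with $\partial_{t_k}=O(|\xi|)$, so $\mathcal A_\gamma^\omega f_\lambda$ has full $\mathbb R^n$-Fourier support essentially inside a ball of radius $O(\lambda)$. For any $g$ with Fourier support in $\{|\Xi|\lesssim\lambda\}$ in $\mathbb R^n$, a Bernstein-type bound gives $|g|^p\lesssim \lambda^n(|g|^p\ast\mathbf 1_{\mathbb B^n(0,\lambda^{-1})})$, and integrating against $d\mu$ together with $\mu(\mathbb B^n(\cdot,\lambda^{-1}))\le\langle\mu\rangle_\alpha \lambda^{-\alpha}$ yields
\[
\|g\|_{L^p(d\mu)}\lesssim \lambda^{(n-\alpha)/p}\langle\mu\rangle_\alpha^{1/p}\|g\|_{L^p(\mathbb R^n)}.
\]
Applied with $g=\mathcal A_\gamma^\omega f_\lambda$, the frequency-localized claim reduces to the sharp local smoothing
\[
\|\mathcal A_\gamma^\omega f_\lambda\|_{L^p(\mathbb R^n)}\lesssim_\varepsilon \lambda^{-2/p+\varepsilon}\|f_\lambda\|_{L^p},\qquad p>p(d),
\]
which after dyadic summation closes the argument.

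When $m=1$ this is the sharp $L^p$ local smoothing for averages over nondegenerate curves, known for $p>p(d)$ via the Pramanik--Seeger framework together with the decoupling-based refinements of Guth--Wang--Zhang (for $d=2$) and Ko--Lee--Oh (for $d\ge 3$), all cited earlier in the paper. For $m\ge 2$, I would freeze $\mathbf t'=(t_2,\ldots,t_m)$ and apply the one-parameter sharp local smoothing in the pair of variables $(x,t_1)$ to the reduced operator $f\mapsto \mathcal A_\gamma^\omega f(\cdot,\cdot,\mathbf t')$, then integrate trivially over the bounded $\mathbf t'$-range by Fubini. The main obstacle is this reduction: the $t_1$-dilation only scales the coordinates indexed by $\omega^{-1}(1)$ while the remaining coordinates are translated by a $\mathbf t'$-dependent curve $s\mapsto(t_{\omega(j)}\gamma_j(s))_{\omega(j)\ne 1}$, so the frozen family is not a standard translation-dilation average. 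One must verify that the cone associated with this frozen one-parameter family has $d-1$ nonvanishing principal curvatures uniformly in $\mathbf t'$, so that the sharp one-parameter local smoothing applies; this should follow from the nondegeneracy \eqref{torsion} of $\gamma$, but it is the step where the multi-parameter structure $\omega$ genuinely enters and which will require the most care.
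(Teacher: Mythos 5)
Your overall architecture matches the paper's: Littlewood--Paley decomposition, a fractal-to-Lebesgue conversion for functions with $\mathbb R^n$-frequency localization, and a reduction to the sharp $L^p\to L^p$ local smoothing estimate (Theorem \ref{thm-ls}). Two points deserve comment, one minor and one a genuine gap.

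The minor point: because of the cutoff $\chi(t_j)$, the space-time Fourier transform $\mathcal F_{x,\mathbf t}(\mathcal A_\gamma^\omega P_\lambda f)$ is not compactly supported in $\tau$; it merely decays rapidly once $|\tau|\gg|\xi|$. Your ``essentially inside a ball'' phrasing acknowledges this, but Lemma \ref{fractal} requires exact compact support. The paper handles it by splitting $\mathcal A_\gamma^\omega = \mathcal A_\gamma' + \mathcal A_\gamma^e$, where $\mathcal A_\gamma'$ has compactly supported frequency, and showing by repeated integration by parts in $\mathbf t$ that the high-$|\tau|$ piece $\mathcal A_\gamma^e P_\lambda$ has a kernel bounded by $\lambda^{-N}(1+|x|)^{-N}(1+|\mathbf t|)^{-N}$, hence contributes $O(\lambda^{-N})$ by Schur's test; Lemma \ref{fractal} is then applied only to $\mathcal A_\gamma' P_\lambda f$.

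The genuine gap is the multi-parameter reduction, exactly where you flag uncertainty. Freezing $(t_2,\ldots,t_m)$ directly, as you propose, does not produce a translation--dilation average in $(x,t_1)$: $t_1$ only scales the coordinates with $\omega(i)=1$, and you would then have to redo the cinematic-curvature analysis for the resulting variable-coefficient one-parameter family, a task you correctly identify as delicate but do not carry out. The paper sidesteps this entirely with a change of variables. Substituting $(t_1, t_2, \ldots, t_m) \mapsto (t_1, t_1 t_2, \ldots, t_1 t_m)$ in the $\mathbf t$-integral before freezing, one has $\gamma_{\mathbf t}^\omega(s) = t_1\,\gamma_{\mathbf t'}^\omega(s)$ with $\mathbf t' = (1, t_2, \ldots, t_m)$, so the inner $(x,t_1)$-operator is precisely the one-parameter average $\mathcal A_{\gamma_{\mathbf t'}^\omega}$ (times harmless cutoffs $\prod_{j\ge 2}\chi(t_1 t_j)$ and the Jacobian $t_1^{m-1}$). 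Since $\gamma_{\mathbf t'}^\omega$ is the image of $\gamma$ under a nonsingular diagonal map, it is nondegenerate uniformly over the compact $\mathbf t'$-range, and Theorem \ref{thm-ls} applies verbatim. This single substitution replaces the open-ended curvature verification you anticipated and is the piece missing from your argument.
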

If $\mu$ is the Lebesgue measure, i.e., $\alpha=n$, the smoothing order $\sigma > -2/p$ does not depend on $m$. 
We deduce Theorem \ref{m-smoothing} from the following sharp local smoothing estimates for $\mathcal A_\gamma$. 
\begin{thm}[\cite{GWZ, KLO}] \label{thm-ls}
For $d\ge2$, let $\gamma$ be a smooth curve in $\mathbb R^d$ satisfying \eqref{torsion}. 
Then, for $p >p(d)$ and  any $ r > -2/p$, we have
\begin{equation}\label{ls}
\|  \mathcal A_\gamma f\|_{L^p(\mathbb R^{d+1})} \le C \|f \|_{L^p_{r}}.
\end{equation}
\end{thm}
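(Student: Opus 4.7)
The plan is to follow the standard paradigm for sharp local smoothing over curves: combine frequency localization with a decoupling inequality for the cone-type hypersurface generated by $\gamma$. By a Littlewood--Paley decomposition $f = \sum_{k\ge 0} P_k f$, with $P_k$ projecting onto frequencies $|\xi|\sim 2^k$, the estimate \eqref{ls} reduces (via Littlewood--Paley theory in $x$ and the near-preservation of frequency support under $\mathcal A_\gamma$) to the single-scale bound
\[
\|\mathcal A_\gamma P_k f\|_{L^p(\mathbb R^{d+1})} \lesssim 2^{(2/p + \varepsilon)k}\,\|P_k f\|_{L^p(\mathbb R^d)}
\]
for every $\varepsilon>0$, uniformly in $k$. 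Summing in $k$ accommodates any $r > -2/p$.

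Writing $\mathcal A_\gamma f$ as a Fourier integral operator with phase $x\cdot\xi + t\gamma(s)\cdot\xi$, the nondegeneracy \eqref{torsion} guarantees that the canonical relation parametrises a smooth hypersurface $\mathcal C_\gamma \subset \mathbb R^{d+1}$ whose curvature is inherited from $\gamma$. At frequency scale $2^k$, stationary phase in $s$ concentrates the symbol near directions where $\gamma'(s)\cdot\xi = 0$; decomposing the angular variable $\xi/|\xi|$ into caps $\theta$ of aperture $2^{-k/2}$ adapted to $\mathcal C_\gamma$ yields a frequency decomposition $\mathcal A_\gamma P_k f = \sum_\theta \mathcal A_{\gamma,\theta} f$, where each piece $\mathcal A_{\gamma,\theta}$ concentrates in physical space on plates dual to the corresponding cap.

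The heart of the argument is the sharp $\ell^2(L^p)$-decoupling inequality
\[
\Big\|\sum_\theta \mathcal A_{\gamma,\theta}f\Big\|_{L^p(\mathbb R^{d+1})} \lesssim 2^{\varepsilon k}\Big(\sum_\theta \|\mathcal A_{\gamma,\theta}f\|_{L^p}^2\Big)^{1/2}
\]
for $\mathcal C_\gamma$. For $d=2$, $\mathcal C_\gamma$ is, after a smooth change of variables, the standard light cone, and one invokes the Guth--Wang--Zhang theorem, which is sharp in the range $p > 4 = p(2)$. For $d \ge 3$, $\mathcal C_\gamma$ is a cone over the nondegenerate curve $\gamma$, and the corresponding decoupling (derived from Bourgain--Demeter decoupling of the moment curve) is sharp at $p > 4d-2 = p(d)$; this is precisely where the exponent $p(d)$ enters the argument.

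The main obstacle is the matching single-cap estimate: one must show that for each cap $\theta$, $\mathcal A_{\gamma,\theta}$ obeys an $L^p$ bound with the correct scale-dependent loss so that, combined with the decoupling above and the $\ell^2$-summation over the $\sim 2^{(d-1)k/2}$ caps, one recovers the target $2^{(2/p+\varepsilon)k}\|P_k f\|_{L^p}$. This is carried out via a wave-packet decomposition of $\mathcal A_{\gamma,\theta}$ into tubes in $(x,t)$-space whose orientation and scale are dictated by $\theta$; integration by parts using \eqref{torsion} establishes the near-orthogonality of these tubes, giving the required single-cap bound. Assembling the single-cap estimate with the decoupling, and summing the resulting dyadic-scale inequalities in $k$, completes the proof of \eqref{ls}.
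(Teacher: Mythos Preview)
The paper does not prove Theorem~\ref{thm-ls}; it is stated with attribution to \cite{GWZ} (for $d=2$) and \cite{KLO} (for $d\ge3$) and then used as a black box in the proof of Theorem~\ref{m-smoothing}. So there is no ``paper's own proof'' to compare against.

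As for your sketch itself: the $d=2$ case is accurately attributed---Guth--Wang--Zhang prove a sharp \emph{square function} estimate for the cone in $\mathbb R^3$, which is strictly stronger than $\ell^2$-decoupling and is what actually yields \eqref{ls} for $p>4$. Your description of it as a decoupling input is slightly off but the reference is right. For $d\ge3$, however, the outline has a real gap. The $\ell^2$-decoupling inequality for the cone over a nondegenerate curve (a $2$-surface in $\mathbb R^{d+1}$) does \emph{not} by itself give the sharp smoothing order $-2/p$ down to $p>4d-2$; decoupling-based arguments in earlier work (Pramanik--Seeger, and the Bourgain--Demeter moment-curve decoupling you allude to) yield \eqref{ls} only in a strictly smaller range of $p$, or with a worse regularity exponent. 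The result in \cite{KLO} instead proceeds via an inductive scheme on the dimension, combining a variable-coefficient decoupling step with a careful analysis of degenerate frequency regions and an $L^2$ input; the threshold $4d-2$ arises from this induction, not from a single sharp decoupling exponent. Your sentence ``the corresponding decoupling \dots\ is sharp at $p>4d-2=p(d)$; this is precisely where the exponent $p(d)$ enters'' misidentifies the mechanism, and the subsequent ``single-cap estimate plus $\ell^2$-summation over $\sim 2^{(d-1)k/2}$ caps'' bookkeeping does not close to give $2^{(2/p+\varepsilon)k}$ at $p=4d-2$ without the missing inductive ingredient.
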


We also use the next lemma which is commonly used to obtain estimates relative to fractal measures.

\begin{lem}[cf. Lemma 2.7 in \cite{HKL}]\label{fractal}
Let $\mu \in \mathfrak C^{n}(\alpha)$ for $0<\alpha \le n$. Suppose $\widehat F$ is supported on $\mathbb B^{n}(0,\lambda)$. Then 
$\| F\|_{L^p(d\mu)} \lesssim \langle \mu\rangle_\alpha^{\frac1p}\lambda^{\frac{n-\alpha}p} \|F\|_{L^p(\mathbb R^{n})}$ for $p\ge1$.
\end{lem}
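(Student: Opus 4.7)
The plan is to reduce the estimate to a one-sided convolution estimate exploiting the Fourier support condition, then use the $\alpha$-dimensional hypothesis on $\mu$ in the form of a dyadic decomposition against a rescaled Schwartz kernel.

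First, I would choose $\phi \in C_c^\infty(\mathbb R^n)$ with $\phi \equiv 1$ on $\mathbb B^n(0,1)$ and $\supp \phi \subset \mathbb B^n(0,2)$, set $\phi_\lambda(\xi) = \phi(\xi/\lambda)$, and put $K = \check{\phi_\lambda}$. Since $\widehat{F}$ is supported in $\mathbb B^n(0,\lambda)$, we have $F = F * K$, and $K(x) = \lambda^n K_1(\lambda x)$ for a fixed Schwartz function $K_1 = \check\phi$ with $\|K_1\|_1 \lesssim 1$. Hölder's inequality applied to $|F| = |F * K| \le \int|F(\cdot - y)| |K(y)|\,dy$ with the probability measure $|K|\,dy/\|K\|_1$ gives the pointwise bound
\[
|F(z)|^p \le \|K\|_1^{p-1}\, (|F|^p * |K|)(z).
\]

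Next I would integrate this inequality against $d\mu$ and apply Fubini to move the $\mu$-integration inside the convolution:
\[
\int |F|^p\, d\mu \le \|K\|_1^{p-1}\int |F(w)|^p \Bigl(\int |K(z-w)|\,d\mu(z)\Bigr)\,dw.
\]
Hence it suffices to prove the uniform bound
\[
\sup_{w\in\mathbb R^n} \int |K(z-w)|\,d\mu(z) \lesssim \langle \mu\rangle_\alpha\, \lambda^{n-\alpha}.
\]
This is the heart of the argument: I split the $z$-integral dyadically into the ball $\mathbb B^n(w, \lambda^{-1})$ and annuli $A_j = \{z : 2^{j-1}\lambda^{-1} \le |z-w| < 2^j\lambda^{-1}\}$ for $j\ge 1$. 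On each piece, rapid decay of $K_1$ yields $|K(z-w)| \lesssim \lambda^n (1+2^j)^{-N}$ for any $N$, while the $\alpha$-dimensional hypothesis gives $\mu(A_j) \le \mu(\mathbb B^n(w, 2^j\lambda^{-1})) \le \langle \mu\rangle_\alpha (2^j/\lambda)^\alpha$. Summing,
\[
\int |K(z-w)|\,d\mu(z) \lesssim \langle \mu\rangle_\alpha\, \lambda^{n-\alpha} \sum_{j\ge 0} 2^{j\alpha}(1+2^j)^{-N},
\]
which converges for $N > \alpha$ and gives the claim.

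Combining the pointwise inequality with this uniform bound and taking $p$-th roots yields the desired estimate. The only delicate point is the dyadic sum, and since $\alpha \le n$ is bounded we may fix $N$ once and for all, so the implicit constant depends only on $n$ and $p$. No step poses a genuine obstacle; the argument is essentially a weighted Young-type inequality with the weight $d\mu$ controlled through the dimensional bound.
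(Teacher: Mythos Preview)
Your argument is correct and is the standard proof of this well-known lemma. Note that the paper does not actually give a proof here; it simply cites \cite{HKL}, so there is nothing to compare against beyond observing that your reproducing-kernel plus Jensen/Fubini approach, followed by the dyadic estimate on $\int|K(z-w)|\,d\mu(z)$, is exactly the canonical route.
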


\begin{proof}[Proof of Theorem \ref{m-smoothing}]
Let $\beta \in C_0^\infty((2^{-1},2))$ such that $\sum_j \beta(2^{-j}s)=1$ for $s \neq0$
and set $\beta_0=\sum_{j \le 0} \beta(2^{-j}\cdot)$.
Let $P_\lambda$ be a standard Littlewood-Paley projection operator given by $\widehat{P_\lambda f}(\xi) = \beta(\lambda^{-1}|\xi|) \widehat f(\xi)$.  
Then, we have 
\[
 \mathcal A_\gamma^\omega  f (x,\mathbf t) =  \mathcal A_\gamma^\omega P_0 f (x,\mathbf t)  + \sum_{j\ge 1}  \mathcal A_\gamma^\omega P_{2^j} f (x,\mathbf t) ,
\]
where $\widehat{P_0f}=\beta_0(|\xi|)\widehat f(\xi)$.
Since the kernel of $ \mathcal A_\gamma^\omega P_0 f (x,\mathbf t)$ decays rapidly outside of the unit ball, we have
$ \|    \mathcal A_\gamma^\omega P_0 f  \|_{L^p(d\mu)} \le C \|f\|_{L^p}$ by Schur's test.
Thus it suffices to show that, for $\lambda \ge1$,
\begin{equation}\label{j}
\|  \mathcal A_\gamma^\omega P_\lambda f \|_{L^p(d\mu)} \le C \langle \mu\rangle_\alpha^{\frac1p} \lambda^\sigma \|f\|_{L^p}  ,~ \quad \sigma > (n-\alpha-2)/p.
\end{equation}

Since $\chi$ is compactly supported, the support of the space-time Fourier transform of  $ \mathcal A_\gamma^\omega P_\lambda f$  is unbounded. 
To apply Lemma \ref{fractal}, we decompose the frequency support of $ \mathcal A_\gamma^\omega P_\lambda f$ into a bounded set and its complement. 

For $ \tau=(\tau_1,\dots,\tau_m)$,  let us set
\[
\mathfrak m_\lambda (\xi,\tau) = \beta(\lambda^{-1}|\xi|)   \iint e^{2\pi i ( \xi\cdot \gamma_{\mathbf t}^\omega(s) -\mathbf t \cdot  \tau )} \prod_{j=1}^m \chi(t_j)\psi(s) \, ds\, d\mathbf t ,  
\]
so we have $\mathcal F_{x,\mathbf t} (\mathcal A_\gamma^\omega P_\lambda f) (\xi, \tau) =\mathfrak m_\lambda (\xi,\tau)\widehat f(\xi)$.  
We define a frequency localized operator $ \mathcal A_\gamma'$ by 
\[
\mathcal F_{x,\mathbf t} (\mathcal A_\gamma' f) (\xi,\tau) = \prod_{j=1}^m \beta_0 \big(   ( C\lambda)^{-1}\tau_j \big) \mathfrak m_\lambda (\xi,\tau)\widehat f(\xi) 
\]
for a sufficiently large constant $C\ge1$ to be chosen later.
We set 
\[ 
\mathcal A_\gamma^{e}   = \mathcal A_\gamma^\omega   - \mathcal A_\gamma'.
\]

First, we show 
\begin{align}\label{decay}
\| \mathcal A_\gamma^e P_\lambda f  \|_{L^p(d\mu)} \lesssim \langle \mu \rangle_\alpha^{\frac1p} \lambda^{-N} \|f\|_p 
\end{align}
for any $N\ge 1$.
We may write $A_\gamma^e P_\lambda f(x,\mathbf t) = \mathcal K_\lambda (\cdot,\mathbf t)\ast f(x) $, where 
\[
\mathcal K_\lambda(x,\mathbf t)= \iint \Big( 1- \prod_{j=1}^m \beta_0((C\lambda)^{-1}\tau_j)  \Big) \mathfrak m_\lambda (\xi,\tau) e^{2\pi i (x\cdot \xi + \mathbf t \cdot \tau)} d\xi d\tau.
\]
By integration by parts in $\mathbf t$, we have
$|\partial_{\xi,\tau}^{\kappa}\mathfrak m_\lambda(\xi,\tau)| \lesssim \lambda^{-N}(1+|\tau|)^{-N}$ for any multi-indices $\kappa$ and $N\ge1$,  provided that  $|\tau| \ge C|\xi|$
for a sufficiently large $C$ satisfying $C\ge1+10\sup_{s \in I_\circ} |\gamma(s)|$. 
Then, integration by parts yields
$
| \mathcal K_\lambda(x,\mathbf t) | \lesssim \lambda^{- N} (1+ |x|)^{-N} (1+|\mathbf t|)^{-N}
$
for any $N\ge 1$.
By Schur's test, we get the estimate \eqref{decay}  for any $\mu \in \mathfrak C^{n}(\alpha)$.

Now, we show 
\begin{equation}\label{loc}
\|   \mathcal A_\gamma'   P_\lambda f\|_{L^p(d\mu)} \lesssim \langle\mu\rangle_\alpha^{\frac1p} \lambda^{\frac{n-\alpha}{p} + r } 
\|    f\|_{L^p(\mathbb R^{d})}  , ~ \quad r > -2/p.
\end{equation}
Since $\mathcal F_{x,\mathbf t} (\mathcal A_\gamma' P_\lambda f)$ is supported in $\mathbb B^{n} (0,C (d,m)\lambda)$, we can apply Lemma \ref{fractal} to get 
\[
\|\mathcal A_\gamma' P_\lambda f\|_{L^p(d\mu)} \lesssim \langle\mu\rangle_\alpha^{\frac1p}\lambda^{\frac{n-\alpha}p}  \| \mathcal A_\gamma' P_\lambda f\|_{L^p(\mathbb R^{n})} .
\]  
Using $| \mathcal A_\gamma' P_\lambda f| \le |\mathcal A_\gamma^\omega P_\lambda f| + |\mathcal A_\gamma^e P_\lambda f|$ and \eqref{decay},
we see 
\[ 
\|   \mathcal A_\gamma' P_\lambda f\|_{L^p(d\mu)} \lesssim \langle\mu\rangle_\alpha^{\frac1p} \lambda^{\frac{n-\alpha}p} 
\|   \mathcal A_\gamma^\omega P_\lambda f\|_{L^p(\mathbb R^{n})} +\langle\mu\rangle_\alpha^{\frac1p}\lambda^{-N} \|f\|_p
\]  
for any $N\ge1$.
In order to prove \eqref{loc}, it suffices to show 
\begin{align}\label{loc2}
\|\mathcal A_\gamma^\omega P_\lambda f\|_{L^p(\mathbb R^{n})}  \lesssim \lambda^r \| f\|_{L^p}, ~ \ r >-2/p.
\end{align}
Using Fubini's theorem, we make change of variables $(t_1,t_2, \dots,t_m) \mapsto (t_1,t_1 t_2,\dots,t_1 t_m)$ to get
\begin{align*}
&\big\| \mathcal A_\gamma^\omega P_\lambda f\big\|_{p}^p
 = \int\!\!\cdots\!\!\int  \Big( \iint \big| \mathcal A_{\gamma_{\mathbf t'}^\omega} P_\lambda f (x, t_1 )  \big|^p t_1^{m-1}  \prod_{j=2}^m \chi(t_1t_j)\, dx \,d t_1 \Big) \, dt_2 \cdots  d t_d  ,
\end{align*}
where $\mathbf t' = (1,t_2,\dots,t_m)$. 
For fixed $t_2,\dots, t_m$, it is obvious that $\gamma_{\,\mathbf t'}^\omega$ is nondegenerate. 
Applying Theorem \ref{thm-ls} to the inner integral, we obtain \eqref{loc2}, and thus \eqref{loc} follows.

Combining \eqref{decay} and \eqref{loc}, we get
the desired estimate \eqref{j} for $\sigma =(n -\alpha)/p + r  > (n-\alpha-2)/p$.
\end{proof}

\subsection{Bessel capacity}
To prove Theorem \ref{thm-main} and \ref{m-para},
we begin with  Bessel capacity which is closely related to Hausdorff dimension. 
For a Borel set $ E \subset \mathbb R^d$, let us define the Bessel capacity $B_\sigma^{p}(E)$ by
\[
B_\sigma^{p}( E )= \inf \big\{ \|f\|_{L_\sigma^{p}}^p: f \in C_0^\infty(\mathbb R^d),\, f \ge \chi_E \big\} 
\]
for $ \sigma >  0 $ and $ p>1$.
By $\mathcal H^\sigma$ we denote $\sigma$-dimensional Hausdorff measure. The following describes a relation between the Bessel capacity and the Hausdorff measure. 
\begin{thm}[Theorem  2.6.16 in \cite{Zi}]\label{capacity}
Let $p>1$ and $0<\sigma p<d$. 
Also let $E \subset \mathbb R^d$ be a Borel set.
If $\mathcal H^{d-\sigma p}(E)<\infty$, then $B_\sigma^{p}(E)=0$.
Conversely, if $B_\sigma^{p}(E) = 0$, then $\mathcal H^{d-\sigma p+\epsilon}(E)=0$ for every $\epsilon>0$.
\end{thm}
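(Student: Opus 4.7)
I would handle the two halves of the theorem by different tools, treating $s := d - \sigma p$ as the critical dimension.

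For the ``capacity zero'' direction, suppose $\mathcal{H}^s(E) < \infty$. My approach rests on the dual/energy characterization of Bessel capacity from potential theory: $B_\sigma^{p}(E) > 0$ iff $E$ supports a nonzero finite Radon measure $\nu$ whose Bessel potential lies in $L^{p'}$. When $p=2$, Plancherel combined with $G_\sigma \ast G_\sigma = G_{2\sigma}$ and $G_{2\sigma}(y) \sim |y|^{-s}$ near $0$ identifies this with finite Riesz $s$-energy $I_s(\nu) = \iint |x-y|^{-s}\,d\nu(x)\,d\nu(y) < \infty$; for general $p$ the Hedberg--Wolff theorem plays the same role via the Wolff potential. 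The next step is the energy--dimension principle: any nonzero $\nu$ with $I_s(\nu) < \infty$ forces $\mathcal{H}^s(\supp \nu) = \infty$. I would prove this via the Fubini identity $I_s(\nu) \simeq \int d\nu(x) \int_0^1 r^{-s-1}\nu(B(x,r))\,dr$, arguing that if $\mathcal{H}^s$ is finite on $\supp \nu$, then by Radon--Nikodym and the standard density theorems $\nu$ has positive upper $s$-density at $\nu$-a.e.\ point, making the inner integral diverge. Contraposition then gives $B_\sigma^{p}(E) = 0$.

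For the ``Hausdorff-measure zero'' direction, suppose $B_\sigma^{p}(E) = 0$. Then I can pick $g_n \ge 0$ in $L^p$ with $G_\sigma \ast g_n \ge 1$ on $E$ and $\|g_n\|_p \to 0$. Fix $\varepsilon > 0$. For each $x \in E$, a dyadic decomposition of $G_\sigma \ast g_n(x)$ against the kernel bound $G_\sigma(y) \sim |y|^{\sigma-d}$ for $|y|\le 1$, combined with pigeonholing against the convergent series $\sum_k 2^{-k\varepsilon/2}$, yields a scale $r(x,n)\in(0,1]$ with
\[
\int_{B(x,\,r(x,n))} g_n(y)\,dy \;\gtrsim\; r(x,n)^{d-\sigma+\varepsilon/2}.
\]
Vitali's covering lemma then extracts from $\{B(x,r(x,n))\}_{x\in E}$ a disjoint subfamily $\{B(x_i, r_i)\}$ such that $\{B(x_i, 5r_i)\}$ still covers $E$. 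H\"older on each ball gives $\|g_n\|_{L^p(B(x_i,r_i))} \gtrsim r_i^{(d-\sigma+\varepsilon/2)-d/p'}$; raising to the $p$-th power and summing over the disjoint balls produces
\[
\sum_i r_i^{\,s + p\varepsilon/2} \;\lesssim\; \|g_n\|_p^p \;\longrightarrow\; 0 .
\]
Since the radii themselves also tend to $0$ as $n\to\infty$, this is exactly $\mathcal{H}^{s+p\varepsilon/2}(E) = 0$, and letting $\varepsilon \to 0$ handles all exponents strictly above $s$.

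\textbf{Main obstacle.} The subtle step is the first direction at the \emph{critical} dimension: the hypothesis is only $\mathcal{H}^s(E) < \infty$, not $=0$, so covers with $\sum r_i^s$ arbitrarily small are unavailable. This rules out the naive strategy of summing the scaling estimate $B_\sigma^{p}(B(x,r)) \lesssim r^s$ via countable subadditivity, which gives only $B_\sigma^{p}(E)^{1/p} \lesssim \sum r_i^{s/p}$ and cannot be driven to zero. The duality/energy reformulation sidesteps this gap by turning vanishing capacity into the (true) statement that no nonzero measure on a set of finite $\mathcal{H}^s$ measure can have finite $s$-energy. For $p \ne 2$ one must replace Riesz by Wolff potentials, which adds technical overhead, but the conceptual picture is identical.
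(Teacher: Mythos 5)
The paper does not contain a proof of this statement: it is quoted as Theorem~2.6.16 from Ziemer's \emph{Weakly Differentiable Functions} and used as a black box in the proof of Proposition~\ref{prop}, so there is no internal argument to compare against. Evaluated on its own terms, your proposal is essentially correct, and you have accurately located the difficulty: the forward implication cannot follow from countable subadditivity together with $B_\sigma^p(B(x,r)) \sim r^s$, since a cover witnessing $\mathcal H^s(E) < \infty$ only yields $B_\sigma^p(E) \lesssim \mathcal H^s(E)$, not $B_\sigma^p(E) = 0$.

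Two places in the forward direction deserve more care. First, the step you attribute to Radon--Nikodym is really the measure-comparison principle: if $\mathcal H^s(A) < \infty$ and $\nu$ is concentrated on $A$, then $\limsup_{r \to 0} r^{-s}\nu(B(x,r)) > 0$ for $\nu$-a.e.\ $x$, because the set where this upper density vanishes has $\nu$-measure bounded by $c\,\epsilon\,\mathcal H^s(A)$ for every $\epsilon>0$, hence is $\nu$-null. Radon--Nikodym is unavailable when $\nu$ is singular with respect to $\mathcal H^s$. Second, for $p \neq 2$ you should note that a positive $\limsup$ only produces a sequence $r_j \downarrow 0$ of good scales, not a uniform lower bound, so divergence of the Wolff potential $\int_0^1 \big(r^{-s}\nu(B(x,r))\big)^{1/(p-1)}\,dr/r$ comes from summing the contributions of infinitely many disjoint intervals $[r_j, 2r_j]$, on each of which monotonicity of $r \mapsto \nu(B(x,r))$ keeps the integrand bounded below. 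You also tacitly invoke capacitability of Borel sets to reduce the dual characterization from compact $K$ to general Borel $E$; this is fine but should be stated.

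The second half is complete and correct: the dyadic pigeonhole against $\sum_k 2^{-k\epsilon/2}$ (after absorbing the $|y|>1$ tail of $G_\sigma\ast g_n$, which is $O(\|g_n\|_p)$), the Vitali disjointification, and the exponent bookkeeping $p(d-\sigma+\epsilon/2)-d(p-1) = s+p\epsilon/2$ all check out. Your closing remark that the radii $r_i$ tend to $0$ as $n\to\infty$ is true, since $r_i \lesssim \|g_n\|_p^{p/(s+p\epsilon/2)}$, but it is not needed: vanishing Hausdorff content $\mathcal H^{s+p\epsilon/2}_\infty(E)=0$ already forces $\mathcal H^{s+p\epsilon/2}(E)=0$.
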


Using \eqref{m-ls} and Theorem \ref{capacity}, we can obtain lower bounds of Hausdorff dimension of unions of curves.
\begin{prop}\label{prop}
Let $0<\alpha \le n:=d+m$ and $\mu \in \mathfrak C^{n}(\alpha)$. 
Suppose that the estimate \eqref{m-ls} holds  with $  \sigma >  \sigma_\circ$ for some $\sigma_\circ=\sigma_\circ( \alpha, n, p) >0 $ and $p>1$.  
If $F' \subset \mathbb R^d \times (1,2)^m$ is a Borel set with $\dim_H F' >\alpha$, then a Borel set
$E \subset \mathbb R^{d}$ containing $\Gamma^\omega(F')$ has Hausdorff dimension at least $d-   \sigma_\circ p$.
\end{prop}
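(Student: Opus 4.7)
My plan is a contradiction argument: I assume the dimension of $E$ is strictly less than $d-\sigma_\circ p$, translate that into the vanishing of a Bessel capacity, and then test the smoothing inequality \eqref{m-ls} against a Frostman measure carried by $F'$.

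First I would suppose for contradiction that $\dim_H E < d - \sigma_\circ p$, and pick $\sigma > \sigma_\circ$ close enough to $\sigma_\circ$ that we still have $\dim_H E < d - \sigma p$. Then $\mathcal H^{d-\sigma p}(E) = 0$, so the first assertion of Theorem \ref{capacity} forces $B_\sigma^p(E) = 0$. Unpacking the definition of the Bessel capacity, I obtain a sequence $\{f_k\} \subset C_0^\infty(\mathbb R^d)$ with $f_k \ge \chi_E$ and $\|f_k\|_{L^p_\sigma}^p \to 0$.

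Next, since $\dim_H F' > \alpha$, Frostman's lemma produces a nonzero compactly supported Borel measure $\mu$ sitting inside $F'$ with $\mu(\mathbb B^n(z,\rho)) \lesssim \rho^\alpha$; in particular $\mu \in \mathfrak C^n(\alpha)$. For any $(x,\mathbf t) \in F'$ the curve $x + \gamma_{\mathbf t}^\omega(I)$ is contained in $\Gamma^\omega(F') \subset E$, so $f_k(x + \gamma_{\mathbf t}^\omega(s)) \ge 1$ for every $s \in I$. Combined with the facts that $\chi \equiv 1$ on $[1,2]$, $\psi \equiv 1$ on $I$, and the $\mathbf t$-components of points in $F'$ lie in $(1,2)^m$, this yields the pointwise lower bound
\[
\mathcal A_\gamma^\omega f_k(x,\mathbf t) \ge \int_I \psi(s)\, ds =: c > 0
\]
throughout $F'$.

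Finally, applying \eqref{m-ls} with the chosen $\sigma$ and $\mu$ gives
\[
c^p\, \mu(F') \le \int \big|\mathcal A_\gamma^\omega f_k\big|^p\, d\mu \le C \langle \mu \rangle_\alpha\, \|f_k\|_{L^p_\sigma}^p \longrightarrow 0,
\]
which contradicts $\mu(F') > 0$. Nothing in this outline is technically hard; the only mildly delicate point is the compatibility in the first step between the strict inequality $\sigma > \sigma_\circ$ demanded by \eqref{m-ls} and the strict inequality $\dim_H E < d - \sigma p$ needed to invoke Theorem \ref{capacity}, which is precisely why the conclusion is the non-strict bound $d - \sigma_\circ p$ rather than something sharper.
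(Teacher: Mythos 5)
Your proposal is correct and follows essentially the same route as the paper: contradiction via Theorem~\ref{capacity} to get $B_\sigma^p(E)=0$, a Frostman measure on (a compact subset of) $F'$, the pointwise lower bound $\mathcal A_\gamma^\omega f\gtrsim 1$ on $F'$, and then the smoothing estimate \eqref{m-ls} to contradict $\mu(F')>0$. The only cosmetic difference is that the paper chooses $\alpha_1$ with $\alpha<\alpha_1<\dim_H F'$ and a compact $F_1\subset F'$ with $\mathcal H^{\alpha_1}(F_1)>0$ before invoking Frostman, whereas you pass directly to a measure in $\mathfrak C^n(\alpha)$ supported in $F'$; both are fine.
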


\begin{proof}
Suppose that there exists a Borel set $E$ containing $\Gamma^\omega(F')$ such that
\[ \dim_H E <d- \sigma_\circ p .\] 
Taking a number $\sigma > \sigma_\circ$ such that $\dim_H E <d- \sigma p <d- \sigma_\circ p$, we see $\mathcal H^{ d - \sigma   p }(E) <\infty$. By Theorem \ref{capacity}, it follows 
\begin{equation}\label{zero}
B_{\sigma }^p (E) = 0 .
\end{equation}

%Since $\dim_H F' >\alpha$, there exists $\mu \in \mathfrak C^{n}(\alpha)$ such that $\mu(F') \ge c $ for a constant $c>0$ 
For $\alpha_1$ satisfying $\alpha <\alpha_1<\dim_H F'$, there exists a compact set $F_1\subset F'$ such that $\mathcal H^{\alpha_1} (F_1) >0$. 
By Frostman's lemma (see \cite{Mattila} for example), there exists  $\mu \in \mathfrak C^{n}(\alpha_1)$ with $\supp \mu\subset F_1$. 
Let us consider a function  $f\in C_0^\infty(\mathbb R^d)$ such that $f \ge \chi_E$. Since $\supp f \supset E \supset \cup_{(x,\mathbf t)\in F_1} (x + \gamma_{\mathbf t}^\omega(I))$, we have $\mathcal A_\gamma^\omega f(x,\mathbf t) \gtrsim 1$ for all $(x,\mathbf t)\in F_1$. 
Since we are assuming  \eqref{m-ls} for $\sigma >\sigma_\circ$, it follows that
\begin{equation}\label{ineq}
\mu (F_1)^{\frac1p} \lesssim \|  \mathcal A_\gamma^\omega f \|_{L^p(d\mu)} \le C \langle \mu \rangle_{\alpha_1}^{\frac1p}  \| f \|_{L^p_\sigma (\mathbb R^d)}   
\end{equation}
for $ \sigma, p$ satisfying \eqref{zero}.
Note that \eqref{ineq} holds for any function $f\in C_0^\infty(\mathbb R^d)$ such that $f \ge \chi_E$. So, $\mu(F_1)^{\frac1p}  = 0$ from \eqref{zero}, which contradicts to $\supp \mu \subset F_1$. 
Hence $\dim_H E  \ge d- \sigma_\circ p $ for $E$ containing $\Gamma^\omega(F')$.
\end{proof}

Now we prove Theorem \ref{thm-main} and Theorem \ref{m-para}.

\begin{proof}[Proof of Theorem \ref{thm-main}]
Since $\dim_H F >\alpha$, there exists $\mu \in \mathfrak C^{d+1}(\alpha)$ which is supported in $F$ by Frostman's lemma. 
For such $\mu$, 
there exists $\sigma_\circ >0$ such that the estimate \eqref{avr} holds with $\sigma > \sigma_\circ > (d-1-\alpha)/p$ by Theorem \ref{smoothing}. 
Since $\alpha \le d-1$ and $p>1$,
by Proposition \ref{prop} with $m=1$ we see that a set $E\subset \mathbb R^d$ containing $\Gamma(F)$ has Hausdorff dimension at least $ d- \sigma_\circ p$. Taking $\sigma_\circ$ arbitrarily close to $(d-1-\alpha)/p$, we get \eqref{dimE}.
\end{proof}

\begin{proof}[Proof of Theorem \ref{m-para}]
Theorem \ref{m-para} can be obtained in the same manner. 
The difference is that we have \eqref{m-ls} with $\sigma > \sigma_\circ> (d+m -\alpha-2)/p$. If $\alpha \le d+m-2$, we can apply Proposition \ref{prop} to obtain $\dim_H E \ge d- \sigma_\circ p$. We omit the details.
\end{proof}

\begin{rmk}\label{rmk}
By using Theorem \ref{m-smoothing}, one can show that $E$ is of positive Lebesgue measure if $\dim_H F' > d + m - 2$. 
For $\alpha_1$ satisfying $\dim_H F' >\alpha_1 > d+m-2$, let $F_1$ be a compact subset of $F$ with $\mathcal H^{\alpha_1}(F_1) >0$.
Then there exists $\mu \in \mathfrak C^{d+m}(\alpha_1)$ with $\supp \mu \subset F_1$. 
Applying Theorem \ref{m-smoothing} with $\mu \in \mathfrak C^{d+m}(\alpha_1)$, we have 
\begin{equation*}%\label{m-ls}
\mu(F_1)^{\frac1p} \lesssim \big\|  \mathcal A_{\gamma}^\omega  \chi_E \big\|_{L^p(\mathbb R^{d+m},\, d\mu)} \le C\langle \mu \rangle_{\alpha_1}^{\frac 1p}   \|\chi_E \|_{L_\sigma^p}  
\end{equation*}
for $\sigma > (d+m - \alpha_1-2)/p$.
Hence, $E$ can not be of Lebesgue measure zero.
\end{rmk}

\section{Dimension of unions of variable hypersurfaces}\label{variable}

In this section, we prove Theorem \ref{vari}  by making use of the sharp  local smoothing estimates for variable coefficient averaging operators.
%Let $U,V \subset \mathbb R^d$ be small open neighborhoods of $x_0, y_0 \in \mathbb R^d$ respectively, and $ \Phi_t(x,y)$ be a smooth function of $(x,t,y) \in \overline U \times I \times \overline V$.
We consider the Fourier integral operator given by
\[
\mathfrak A f (x,t) = \int a(x,t,y) \delta(\Phi_t(x,y)) f(y)\,dy 
\]
where $a \in C_c^\infty(U_\circ\times J_\circ\times V_\circ)$ is a positive bump function such that $a=1$ on $U\times J \times V$.
Sharp  local smoothing estimates for $\mathfrak Af$ were shown by Gao--Liu--Miao--Xi \cite{GLMX} for $d=2$, and  Beltran--Hickman--Sogge \cite{BHS} for higher dimensions.

\begin{thm}[{\cite{BHS, GLMX}}]\label{LS variable}
Suppose $\Phi_t$ satisfies the conditions \eqref{S1} and \eqref{S2} on the support of $a$. Then
\begin{align}\label{LSA}
\| \mathfrak A f\|_{L^p(\mathbb R^{d+1})} \lesssim \|f\|_{L_{r}^p(\mathbb R^d)}, \quad r> -d/p
\end{align}
holds for $4 \le p <\infty$ if $d=2$, and for $2(d+1)/(d-1) \le p <\infty$ if $d\ge3$.
\end{thm}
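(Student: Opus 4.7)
The plan is to represent $\mathfrak A$ as a Fourier integral operator of cone type, reduce via Littlewood--Paley to a dyadic frequency-localized estimate, and then invoke a variable-coefficient cone decoupling inequality together with a per-cap $L^p$ bound.

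\emph{Step 1 (FIO representation).} Using the Fourier representation $\delta(s) = (2\pi)^{-1}\!\int e^{i\tau s}\,d\tau$ together with the implicit function theorem (afforded by $\partial_y\Phi_t\neq 0$, which follows from \eqref{S1}), one rewrites
\[
\mathfrak A f(x,t) = \int e^{i\Psi(x,t;\xi)}\, a_0(x,t;\xi)\,\widehat f(\xi)\,d\xi \;+\; (\text{lower-order terms}),
\]
where $\Psi$ is homogeneous of degree $1$ in $\xi$. The rotational curvature \eqref{S1} makes the canonical relation a local graph, while the cinematic condition \eqref{S2} says the characteristic variety of $\Psi$ is a cone whose base has $d-1$ nonvanishing principal curvatures; this is precisely the cone-type FIO geometry needed below.

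\emph{Step 2 (Littlewood--Paley reduction).} Decompose $f = P_0 f + \sum_{\lambda\ge 1}P_\lambda f$. The piece $\mathfrak A P_0 f$ has a smooth, compactly supported kernel and is trivially handled by Schur's test. Thus it suffices to prove, for every $\varepsilon>0$ and all dyadic $\lambda\ge 1$,
\begin{equation}\label{freq-loc-prop}
\|\mathfrak A P_\lambda f\|_{L^p(\mathbb R^{d+1})}\lesssim \lambda^{-d/p+\varepsilon}\|f\|_{L^p(\mathbb R^d)}
\end{equation}
in the stated range of $p$; summing against the Sobolev weight $\lambda^r$ with $r>-d/p$ then yields the theorem.

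\emph{Step 3 (Variable-coefficient cone decoupling).} Partition the frequency annulus $\{|\xi|\sim\lambda\}$ into spherical caps $\theta$ of angular aperture $\lambda^{-1/2}$ (cardinality $\sim\lambda^{(d-1)/2}$) and write $P_\lambda f=\sum_\theta f_\theta$. The key step is the variable-coefficient decoupling inequality of Beltran--Hickman--Sogge (for $d\ge 3$) and Gao--Liu--Miao--Xi (for $d=2$) for cone-type FIOs satisfying the cinematic curvature condition:
\[
\|\mathfrak A P_\lambda f\|_{L^p(\mathbb R^{d+1})}\lesssim \lambda^{\varepsilon}\Bigl(\sum_\theta \|\mathfrak A f_\theta\|_{L^p(\mathbb R^{d+1})}^p\Bigr)^{1/p}
\]
for $p\ge 4$ when $d=2$ and $p\ge 2(d+1)/(d-1)$ when $d\ge 3$.

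\emph{Step 4 (Per-cap bound and summation).} For each cap $\theta$, a wave-packet / stationary-phase analysis using the nondegeneracy in \eqref{S1}--\eqref{S2} shows that $\mathfrak A f_\theta$ is essentially concentrated on a union of curved slabs of thickness $\lambda^{-1}$ and transverse width $\lambda^{-1/2}$ in $d-1$ directions, yielding a critical-exponent single-cap estimate which, together with $\#\{\theta\}\sim\lambda^{(d-1)/2}$ and $\ell^p$-orthogonality $\sum_\theta\|f_\theta\|_{L^p}^p\lesssim\|f\|_{L^p}^p$, produces exactly the gain $\lambda^{-d/p+\varepsilon}$ required in \eqref{freq-loc-prop} at the critical $p$. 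Interpolation with the trivial $L^\infty\to L^\infty$ bound extends the estimate to larger $p$.

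The technical heart of this plan is Step 3. Unlike the flat Bourgain--Demeter cone decoupling, the caps must be replaced by curved ``slabs'' in phase space adapted to the bicharacteristic flow of $\Psi$, and one must check that \eqref{S2} supplies the correct cone-type curvature uniformly at every base point. This is established in \cite{BHS, GLMX} by induction on scales combined with $k$-broad / $k$-linear Kakeya-type inequalities for tubes curved by the FIO geometry, and it is where the restriction $p\ge 2(d+1)/(d-1)$ (respectively $p\ge 4$) enters. Once this inequality is available, Steps 1, 2 and 4 are essentially bookkeeping with standard oscillatory integral tools.
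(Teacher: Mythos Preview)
The paper does not give its own proof of this theorem; it is cited from \cite{BHS,GLMX} and used as a black box (note the attribution in the theorem header and the sentence immediately preceding it). There is thus no proof in the paper to compare your proposal against.

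For what it is worth, your sketch is broadly in the spirit of how the cited references argue, but two points deserve correction. First, for $d=2$ the paper \cite{GLMX} obtains \eqref{LSA} via a sharp variable-coefficient \emph{square function} estimate (transferring the Guth--Wang--Zhang cone square function estimate to the FIO setting), not via an $\ell^p$-decoupling inequality as you write in Step~3. Second, for $d\ge 3$ the mechanism in \cite{BHS} is an iterative parabolic-rescaling argument that reduces the variable-coefficient Wolff-type inequality to the constant-coefficient Bourgain--Demeter cone decoupling at small scales; it does not proceed through ``$k$-broad / $k$-linear Kakeya-type inequalities for tubes curved by the FIO geometry'' as you describe --- that machinery is characteristic of restriction-type arguments rather than the decoupling transfer in \cite{BHS}. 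Neither point invalidates the overall strategy, but your account of the ``technical heart'' does not accurately reflect the cited proofs.
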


The range of $p$ can not be generally extended when $d=2$ and odd $d\ge3$.
In the same manner as in the proof of Theorem \ref{m-smoothing}, we obtain the following from Theorem \ref{LS variable}.

\begin{cor}\label{vari frac}
Let $0<\alpha \le d+1$ and $\mu \in \mathfrak C^{d+1}(\alpha)$. Then,
we have
\begin{equation}\label{vari claim}
\| \mathfrak A  f \|_{L^p(\mathbb R^{d+1},\, d\mu )} \le C\langle \mu \rangle_\alpha^\frac1p  \|f\|_{L_\sigma^p(\mathbb R^d)},
\quad \sigma > (1-\alpha)/p
\end{equation}
 for $4 \le p <\infty$ when $d=2$, and for $2(d+1)/(d-1) \le p <\infty$ when $d\ge3$.
\end{cor}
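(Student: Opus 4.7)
The plan is to mimic the proof of Theorem \ref{m-smoothing} in the variable coefficient setting, substituting Theorem \ref{LS variable} for Theorem \ref{thm-ls} as the underlying sharp local smoothing input. I would begin with a standard Littlewood–Paley decomposition $f = P_0 f + \sum_{\lambda \ge 1} P_\lambda f$. The low-frequency piece $\mathfrak A P_0 f$ has a bounded, compactly-supported-in-$(x,t)$ kernel, so Schur's test immediately gives $\|\mathfrak A P_0 f\|_{L^p(d\mu)} \lesssim \langle\mu\rangle_\alpha^{1/p}\|f\|_p$. It remains to prove, for each dyadic $\lambda \ge 1$,
\begin{equation}\label{plan-dyadic}
\|\mathfrak A P_\lambda f\|_{L^p(d\mu)} \lesssim \langle\mu\rangle_\alpha^{1/p}\,\lambda^{\sigma}\|f\|_p, \qquad \sigma > (1-\alpha)/p,
\end{equation}
after which summation in $\lambda$ yields \eqref{vari claim}.

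To produce \eqref{plan-dyadic}, I would use the oscillatory representation $\delta(\Phi_t(x,y)) = (2\pi)^{-1}\int e^{i\tau\Phi_t(x,y)}\,d\tau$ together with the Fourier expansion of $P_\lambda f$, so that $\mathfrak A P_\lambda f$ becomes an oscillatory integral with phase $\tau\Phi_t(x,y) + 2\pi y\cdot\xi$ and cutoff $\beta(\lambda^{-1}|\xi|)$. Following the template of Theorem \ref{m-smoothing}, I would insert a $\tau$-cutoff $\beta_0((C\lambda)^{-1}\tau)$ for a large constant $C$, splitting the operator as $\mathfrak A P_\lambda = \mathfrak A' P_\lambda + \mathfrak A^e P_\lambda$. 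On the support of the error part, $|\tau| \ge C\lambda/2$ while $|\xi| \le 2\lambda$. The rotational curvature condition \eqref{S1} forces $|\partial_y \Phi_t| \asymp 1$ on the support of $a$, so the $y$-gradient of the phase satisfies $|\tau\,\partial_y\Phi_t + 2\pi\xi| \gtrsim C\lambda$ once $C$ is large enough. Repeated integration by parts in $y$ then produces a kernel decaying as $\lambda^{-N}(1+|(x,t)-y|)^{-N}$, and Schur's test delivers
\[
\|\mathfrak A^e P_\lambda f\|_{L^p(d\mu)} \lesssim \lambda^{-N} \langle\mu\rangle_\alpha^{1/p}\|f\|_p
\]
for any $N \ge 1$.

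For the main piece $\mathfrak A' P_\lambda f$, by construction the space-time Fourier transform is supported in $\mathbb B^{d+1}(0, C'\lambda)$, so Lemma \ref{fractal} gives
\[
\|\mathfrak A' P_\lambda f\|_{L^p(d\mu)} \lesssim \langle\mu\rangle_\alpha^{1/p}\,\lambda^{(d+1-\alpha)/p}\,\|\mathfrak A' P_\lambda f\|_{L^p(\mathbb R^{d+1})}.
\]
Writing $|\mathfrak A' P_\lambda f| \le |\mathfrak A P_\lambda f| + |\mathfrak A^e P_\lambda f|$ and invoking Theorem \ref{LS variable} with $r > -d/p$ on the first term (and the Lebesgue version of the rapid-decay bound on the second), I obtain $\|\mathfrak A' P_\lambda f\|_p \lesssim \lambda^r \|f\|_p$. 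Combining gives \eqref{plan-dyadic} with $\sigma = (d+1-\alpha)/p + r > (1-\alpha)/p$, completing the proof. The main obstacle is the frequency-localization step: unlike the translation-invariant setting of Theorem \ref{m-smoothing}, where the multiplier $\mathfrak m_\lambda(\xi,\tau)$ factors and integration by parts in $\mathbf t$ is direct, here the coupled phase in $y,\tau,\xi$ forces one to exploit \eqref{S1} carefully to establish the $\lambda^{-N}$ decay of $\mathfrak A^e P_\lambda$.
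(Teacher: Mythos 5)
Your plan follows the correct overall template — Littlewood--Paley decomposition, a frequency-localization step to enable Lemma \ref{fractal}, rapid decay for the error term, and Theorem \ref{LS variable} as the local smoothing input — but the frequency-localization step has a genuine gap. You insert a cutoff $\beta_0((C\lambda)^{-1}\tau)$ in the variable $\tau$ coming from the oscillatory representation $\delta(\Phi_t(x,y)) = (2\pi)^{-1}\int e^{i\tau\Phi_t(x,y)}\,d\tau$. This $\tau$ is \emph{not} the Fourier dual variable to $t$; it is an auxiliary integration variable, and restricting $|\tau|\lesssim\lambda$ does not give literal compact support of the space-time Fourier transform $\mathcal F_{x,t}(\mathfrak A' P_\lambda f)$. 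For $|\tau|\lesssim\lambda$ the gradient $\tau\nabla_{x,t}\Phi_t$ has size $O(\lambda)$, so non-stationary phase in $(x,t)$ gives rapid \emph{decay} of $\mathcal F_{x,t}(\mathfrak A' P_\lambda f)$ outside a ball of radius $\sim\lambda$, but not support in $\mathbb B^{d+1}(0,C'\lambda)$ — the amplitude $a(x,t,y)$ is a compactly supported smooth bump, whose $(x,t)$-Fourier transform is Schwartz but never compactly supported. Lemma \ref{fractal} requires exact compact support, so your claim ``by construction the space-time Fourier transform is supported in $\mathbb B^{d+1}(0, C'\lambda)$'' is not justified, and the subsequent application of Lemma \ref{fractal} does not go through as written.

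The paper circumvents this by first writing $\mathfrak A$, modulo a smoothing operator, in FIO normal form $\widetilde{\mathfrak A} f(x,t) = \int e^{2\pi i\phi(x,t,\xi)}\mathfrak a(x,t,\xi)\widehat f(\xi)\,d\xi$ and then placing the cutoff $\beta_0((C\lambda)^{-1}|(\zeta,\tau)|)$ \emph{directly in the $(x,t)$-Fourier variable $(\zeta,\tau)$}, so that compact support of $\mathcal F_{x,t}(\widetilde{\mathfrak A}' P_\lambda f)$ holds by construction. The corresponding error term $\widetilde{\mathfrak A}^e P_\lambda f$ is then handled by integration by parts in the FIO variables $(x',t',\xi)$ inside the kernel, using the homogeneity bound $|\partial_{x,t}^{\kappa_1}\partial_\xi^{\kappa_2}\phi|\lesssim \lambda^{1-|\kappa_2|}$ for $|\xi|\sim\lambda$ and $|(\zeta,\tau)|\ge C\lambda$. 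To repair your argument along your own lines, you would need to add a second, genuine Fourier-side cutoff in the $(x,t)$-dual variable on top of the $\tau$-cutoff (or replace the $\tau$-cutoff by it), and then verify that the resulting extra error term still enjoys $O(\lambda^{-N})$ bounds — which, once you pass to the FIO form, is precisely what the paper does.
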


\begin{proof}
In local coordinates, modulo a smoothing operator,  $\mathfrak A f$ can be written as a finite sum of the operators 
\Be\label{fio}
\widetilde{\mathfrak A} f(x,t) =  \int e^{2\pi i\phi(x,t,\xi)} \mathfrak a(x,t,\xi)\widehat f(\xi)\,d\xi ,
\Ee
where $\mathfrak a \in C^\infty(\mathbb R^{d}\times \mathbb R\times \mathbb R^d)$ is compactly supported in $(x,t)$  and satisfies
\[|\partial_{x,t}^{\kappa_1}\partial_{\xi}^{\kappa_2} \mathfrak a(x,t,\xi)| \lesssim (1+|\xi|)^{-\frac{d-1}2-|\kappa_2|}\]  for any $\kappa_1$, $\kappa_2$. 
Also, $\phi$ is a smooth function on $\mathbb R^{d}\times \mathbb R\times(\mathbb R^d\setminus \{0\})$ of homogeneous of degree $1$ in $\xi$ satisfying the cinematic curvature condition on $\supp \mathfrak a$ (see \cite{H, MSS}).
So,  it is enough to show that
\begin{equation}\label{claim-1}
\| \widetilde{\mathfrak A}  P_\lambda f \|_{L^p( d\mu )} \le C \langle \mu \rangle_\alpha^\frac1p \lambda^{\sigma}   \|f\|_{p}
\end{equation}
holds for $\lambda \ge1$ if $\sigma > (1-\alpha)/p$.

By \eqref{LSA}, we have %if $r>-d/p$, then
\begin{align}\label{LS tilde}
\| \widetilde{\mathfrak A} P_\lambda f\|_{L^p(\mathbb R^{d+1})} \lesssim \lambda^r \|f\|_{L^p(\mathbb R^d)}, \quad r >-d/p.
\end{align}
As in the proof of Theorem \ref{m-smoothing}, 
we write
\[ 
\widetilde{\mathfrak A}P_\lambda f=\widetilde{\mathfrak A}^{'}P_\lambda f+\widetilde{\mathfrak A}^{e}P_\lambda f  ,
\]
where
\begin{align*}
\mathcal F_{x,t} (\widetilde{\mathfrak A}^{'} P_\lambda f)(\zeta,\tau ) & =\beta_0 \big( (C\lambda)^{-1}|(\zeta,\tau)| \big) \mathcal F_{x,t} (\widetilde{\mathfrak A} P_\lambda f)(\zeta,\tau)  , \\
\mathcal F_{x,t} (\widetilde{\mathfrak A}^{e} P_\lambda f)(\zeta,\tau ) & =\big(1 - \beta_0 \big( (C\lambda)^{-1}|(\zeta,\tau)| \big) \big) \mathcal F_{x,t} (\widetilde{\mathfrak A} P_\lambda f)(\zeta,\tau)  
\end{align*}
for a sufficiently large $C\ge1$.

By rapid decay of the kernel, one can show 
\begin{align}\label{frak m}
\|\widetilde{\mathfrak A}^{e} P_\lambda f\|_{L^p(d \mu)} \lesssim \langle\mu\rangle_\alpha^{\frac1p}\lambda^{-N} \|f\|_p
\end{align}
for any $N\ge1$. Indeed, note that 
$\mathcal F_{x,t} (\widetilde{\mathfrak A} P_\lambda f)(\zeta,\tau)=\int \mathfrak w_\lambda (\zeta,\tau,y)f(y)\,dy$
where
\begin{align*}
\mathfrak w_\lambda (\zeta,\tau,y) = \iiint e^{-2\pi i(x'\cdot \zeta+t'\tau)} e^{2\pi i \phi(x',t',\xi)-2\pi iy\cdot \xi} \mathfrak a(x', t',\xi) \beta(\lambda^{-1}|\xi|) \, dx'dt'd\xi.
\end{align*}
So, we have
$ \widetilde{\mathfrak A}^{e} P_\lambda f(x,t) = \int \mathfrak K_\lambda(x,t,y) f(y)\,dy,$
where
\[\mathfrak K_\lambda (x,t,y) = \iint e^{2\pi i(x\cdot \zeta+t \tau)} \big(1-\beta_0( (C\lambda)^{-1}|(\zeta,\tau)|) \big) \mathfrak w_\lambda (\zeta,\tau,y) \,d\zeta d \tau.
\]
Note that $|\partial_{x,t}^{\kappa_1} \partial_{\xi}^{\kappa_2} \phi| \le C\lambda^{1-|\kappa_2|}$ by homogeneity of $\phi$. Thus repeated integration by parts in $x', t',\xi$  gives $|\partial_{\zeta,\tau}^\kappa \mathfrak w_\lambda (\zeta,\tau,y)| \lesssim \lambda^{-N}(1+|(\zeta,\tau)|)^{-N}(1+\lambda|y|)^{-N}$ for any $N\ge1$ if  $|(\zeta,\tau)| \ge C\lambda$ for a sufficiently large $C\ge 1$.
So, integration by parts in $\zeta, \tau$ gives 
\[
|\mathfrak K_\lambda (x,t,y)| \lesssim \lambda^{-N}(1+ |(x,t)|)^{-N} (1+\lambda|y|)^{-N}
\]
for any $N\ge1$, from which we obtain \eqref{frak m}.

To deal with  $\widetilde{\mathfrak A}^{'}P_\lambda f$, we apply Lemma \ref{fractal}, \eqref{frak m}, and \eqref{LS tilde} successively. 
So,  if   $r >-d/p$, we have 
\begin{align*} 
\|  \widetilde{ \mathfrak A}^{'} P_\lambda f\|_{L^p(d\mu)}  \lesssim \langle\mu\rangle_\alpha^{\frac1p} \lambda^{\frac{d+1-\alpha}p + r }  \|  f\|_{p} +\langle\mu\rangle_\alpha^{\frac1p} \lambda^{-N} \|f\|_p 
\end{align*}  for any $N\ge1$.
By this and \eqref{frak m}, we get \eqref{claim-1} for $\sigma > (1-\alpha)/p$. 
\end{proof}

Combining Corollary \ref{vari frac} and Proposition \ref{prop}, we  prove Theorem \ref{vari}.

\begin{proof}[Proof of Theorem \ref{vari}]
The proof is similar to that of Theorem \ref{thm-main}. 
By following the proof of Proposition \ref{prop}, one can easily see Proposition \ref{prop} with $m=1$ holds  for $\cup_{(x,t) \in F}\, \mathcal G_{x,t}$ which replaces $\Gamma^\omega(F')$. Indeed, taking $f \ge \chi_E$ such that $E \supset \cup_{(x,t)\in F} \mathcal G_{x,t}$, since $a =1$ on $U\times J \times  V$,
we see $\mathfrak Af(x,t)\gtrsim 1$ holds whenever $(x,t)\in F$. So, by applying \eqref{vari claim} in place of \eqref{m-ls}, we obtain 
\[
\mu (F)^{\frac1p} \lesssim \|  \mathfrak A f \|_{L^p(d\mu)} \le C \langle \mu \rangle_\alpha^{\frac1p}  \| f \|_{L^p_\sigma (\mathbb R^d)}   
\]
which replaces \eqref{ineq}. In the same manner, it follows that Proposition \ref{prop} holds for  $\cup_{(x,t) \in F}\mathcal G_{x,t}$ replacing $\Gamma^\omega(F')$.

By Corollary \ref{vari frac}, there is a $\sigma_\circ$ such that \eqref{vari claim} holds for $\sigma>\sigma_\circ>(1-\alpha)/p$. Combining this with Proposition \ref{prop} for $\cup_{(x,t) \in F}\, \mathcal G_{x,t}$,
we see that the set $E$ containing $\cup_{(x,t) \in F}\,\mathcal G_{x,t}$ has Hausdorff dimension at least $d- (1-\alpha)$ if $\dim_HF>\alpha$.
\end{proof}

\section{Sharpness of Theorem \ref{m-para} and \ref{m-smoothing}}\label{sharpness}
In this section, we discuss  optimality of the exponent $\alpha=d+m-2$ in Theorem \ref{m-para} (and Theorem \ref{thm-main}), and sharpness of  the regularity exponent $s$ in Theorem \ref{m-smoothing} (and Theorem  \ref{smoothing}).

\subsection{Construction of a type of BRK set for  space curves}
We show that the condition $\dim_H F' > d+ m -2$  in Theorem \ref{m-para} ($\dim_H F' > d-1$ in Theorem \ref{thm-main})  is necessary, in general, for $\Gamma  (F')$ to have positive Lebesgue measure.
Here, we may assume that $F'$ is a subset of $\mathbb R^d \times (0, 1)^m$. 

\begin{prop}\label{BRK}
Let $1\le m \le d$, and let $\gamma$ be a curve defined  on $I_\circ$ by $\gamma( s) = (s^{a_1},\dots, s^{a_d})$ for integers $1\le a_1<\dots<a_d$. Then, there exists a Borel set $F' \subset \mathbb R^d \times (0, 1)^m$ with $\dim_H F' = d + m -2$ such that  $\Gamma^\omega(F') $ is of Lebesgue measure zero.
\end{prop}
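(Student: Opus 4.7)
The plan is to construct $F'$ as a product $V \times C$, where $V \subset \mathbb R^d$ is a $(d-2)$-dimensional coordinate subspace and $C \subset (0,1)^m$ is a Borel set with $\dim_H C = \overline{\dim}_B C = m$ and $m$-dimensional Lebesgue measure zero. By the product dimension formula this gives $\dim_H F' = (d-2) + m = d+m-2$. I fix two distinct indices $j_1, j_2 \in \{1,\dots,d\}$, set $V = \mathrm{span}\{e_j : j \neq j_1, j_2\}$, and let $\pi$ denote orthogonal projection onto $V^\perp = \mathrm{span}\{e_{j_1}, e_{j_2}\}$. Since $V = \ker\pi$,
\[
\Gamma^\omega(F') = V + \Psi = \pi^{-1}(\pi(\Psi)), \qquad \Psi = \bigcup_{\mathbf t \in C}\gamma_{\mathbf t}^\omega(I),
\]
so by Fubini $|\Gamma^\omega(F')|_d = 0$ reduces to showing the planar image $\pi(\Psi) = \{(t_{\omega(j_1)} s^{a_{j_1}}, t_{\omega(j_2)} s^{a_{j_2}}) : \mathbf t \in C,\, s \in I\}$ has $2$-dimensional Lebesgue measure zero. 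The task is then to choose $j_1, j_2$ and $C$ so this holds.

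If $m < d$, the surjection $\omega$ is not injective, so by pigeonhole there exist $j_1 \neq j_2$ with $\omega(j_1) = \omega(j_2) = i_0$. I take
\[
C = (0,1)^{i_0-1} \times C_{i_0} \times (0,1)^{m-i_0},
\]
where $C_{i_0} \subset (0,1)$ is a Cantor-type set with $\dim_H C_{i_0} = 1$ and $|C_{i_0}|_1 = 0$. Then $\pi(\gamma_{\mathbf t}^\omega(s)) = (t_{i_0} s^{a_{j_1}}, t_{i_0} s^{a_{j_2}})$ is a Lipschitz image of the $2$-dimensional Lebesgue-null set $C_{i_0} \times I \subset \mathbb R^2$, yielding $|\pi(\Psi)|_2 = 0$.

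The harder case is $m = d$, where $\omega$ is a bijection (WLOG the identity) and pigeonhole fails. The key observation is that the algebraic curves $\mathcal C_b = \{(u_1, u_2) \in (0,\infty)^2 : u_2 = b\, u_1^{a_2/a_1}\}$ are preserved by the anisotropic dilations $(u_1, u_2) \mapsto (s^{a_1} u_1, s^{a_2} u_2)$. Taking $j_1 = 1, j_2 = 2$ and
\[
C = C' \times (0,1)^{d-2}, \qquad C' = \{(t_1, t_2) \in (0,1)^2 : t_2/t_1^{a_2/a_1} \in B\}
\]
for a Borel set $B \subset (0,\infty)$ with $\dim_H B = 1$ and $|B|_1 = 0$, each point $(t_1 s^{a_1}, t_2 s^{a_2})$ with $\mathbf t \in C$ lies on $\mathcal C_b$ for $b = t_2/t_1^{a_2/a_1} \in B$; hence $\pi(\Psi) \subset \bigcup_{b \in B} \mathcal C_b$, which is a Lipschitz image of the Lebesgue-null set $B \times (0,1) \subset \mathbb R^2$ under $(b, u) \mapsto (u, b u^{a_2/a_1})$. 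The main technical point is confirming $\dim_H C = d$: since $\phi(t_1, t_2) := t_2/t_1^{a_2/a_1}$ is a smooth submersion on $(0,1)^2$ with $1$-dimensional fibers, a standard slicing/fibering lemma yields $\dim_H C' = \dim_H B + 1 = 2$ (and similarly $\overline{\dim}_B C' = 2$), so $\dim_H C = 2 + (d-2) = d$; Lebesgue nullity of $C$ follows by Fubini from $|B|_1 = 0$. In both cases $F'$ is Borel by construction.
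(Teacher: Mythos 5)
Your proof is correct, but it takes a genuinely different route from the paper's. The paper relies on Kinney's classical construction: using the fact that $[0,1]$ is the distance set of the middle-thirds Cantor set $\mathcal C$, it builds a compact measure-zero set $K^{a,b}\subset\mathbb R^2$ containing a translated copy of $t\gamma^{a,b}(I)$ for every $t\in(0,1]$, and then constrains the $(x_1,x_2)$-component of $F'$ to the Cantor diagonal via $\mathfrak c_1(t_1)$ (resp.\ $\mathfrak c_1(b(\mathbf t))$ when $m=d$), so the measure-zero claim is delegated to Kinney's packing lemma. You instead take $F'=V\times C$ a genuine product with $V$ a $(d-2)$-dimensional coordinate subspace, so the curves receive no translation in the two distinguished coordinates and $\Gamma^\omega(F')$ becomes a cylinder over a fan of curves $\bigcup_{\mathbf t\in C}\gamma_{\mathbf t}^\omega(I)$; the measure-zero claim then reduces to the elementary fact that a locally Lipschitz planar image of a Lebesgue-null set is null, sidestepping Kinney entirely. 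Both proofs use the same dichotomy---pigeonhole when $m\le d-1$, homogeneity of the monomials when $m=d$---but exploit homogeneity differently: the paper rescales $s$ so the two distinguished components share a single dilation factor $b(\mathbf t)$ and lands back in Kinney's set $K^{a_1,a_2}$, while you identify the invariant power curves $\mathcal C_b$ of the anisotropic dilation and restrict $b$ to a $1$-dimensional Lebesgue-null set $B$. The trade-off is that your route needs a fibering argument ($\dim_H\phi^{-1}(B)=\dim_H B+1$ for a submersion $\phi$) to verify $\dim_H F'=d+m-2$ when $m=d$, whereas the paper reads the dimension off a graph parametrization; on the other hand you avoid the (nontrivial) input from \cite{Ki}. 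One small remark: you do not actually need $\overline{\dim}_B C=m$---since $V$ is a linear subspace with $\overline{\dim}_B=\dim_H=d-2$ locally, the product inequality $\dim_H(V\times C)\le\overline{\dim}_B V+\dim_H C$ already gives $\dim_H F'=(d-2)+\dim_H C$ from $\dim_H C=m$ alone.
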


To prove Proposition \ref{BRK}, we use Kinney's  construction of Lebesgue measure zero set which consists of similitudes of  a planar  convex curve (\cite{Ki}).  When $m\le d-1$, as is clear from  the proof of Proposition \ref{BRK} below,  Proposition \ref{BRK} remains valid as long as  a projection of  $\gamma$ to 2-plane is convex. Thus, by a suitable change of coordinates  and Taylor expansion, one can see that Proposition \ref{BRK} continues to hold if $\gamma$ is of finite type.  
% by  after suitable change 
However, when $m=d$,  we need an additional assumption that at least two components of $\gamma$ are monomial.

%$m=d$ \cite[p.1080]{Ki} 
\begin{proof}[Proof of Proposition \ref{BRK}]
%We adapt the argument in \cite{Ki}.
Let $\gamma^{a,b} (s) = (s^a,s^b)$ for $ s\in I_\circ$ and $1\le a<b$.
Following the argument in \cite{Ki}, we first construct a compact set $K^{a,b}\subset \mathbb R^2$ of Lebesgue measure zero such that $K$ contains $t \gamma^{a,b}(I)$ for every $0 < t \le 1$.
Let $\mathcal C$ be the standard $1/3$-Cantor set on $[0,1]$. 
Note that $[0,1]$ is the distance set of $\mathcal C$. So, for each $t\in (0,1]$, there exist the smallest numbers $\mathfrak c_1 (t),  \mathfrak c_2(t) \in \mathcal C$  such that $\mathfrak c_2(t) - \mathfrak c_1(t) = t$.  Then the curve $(\mathfrak c_1(t), \mathfrak c_1(t)) + t\gamma^{a,b}(I)$ connects two points $(\mathfrak c_1(t), \mathfrak c_1(t))$ and $(\mathfrak c_2(t), \mathfrak c_2(t))$. 
Since $\gamma^{a,b}$ is a convex curve, 
adapting the argument in \cite{Ki}, one can show that the union of curves 
\begin{equation*}
K^{a,b} := \bigcup_{0< t\le 1}\Big( (\mathfrak c_1(t), \mathfrak c_1(t)) + t\gamma^{a,b}(I) \Big)
\end{equation*}
is of Lebesgue measure zero in $\mathbb R^2$ (see \cite[p.1080]{Ki}).

%Now, we consider a family  generated by $m$-parameters $t_1,\dots,t_m$ in $(0,1)$. 
We treat the cases $m \le d-1$  and $m=d$, separately.
We consider the case $m\le d-1$ first. Since $m\le d-1$, $\omega(i)=\omega(j)$ for some $i\neq j$. There exists a dilation parameter, which we denote by $t_1$,   appearing at least  twice in $\gamma_{\mathbf t}^\omega=(t_{\omega(1)}\gamma_1(s),\dots, t_{\omega(d)} \gamma_d(s))$.  We may assume $i=1$, $j=2$, that is to say,    $t_{\omega(1)}=t_{\omega(2)}=t_1$.
% (see \eqref{m-curve}).
We set \[ F' =  \big\{ (x,\mathbf t) \in [0,1]^d \times (0,1)^m : (x_1,x_2)=(\mathfrak c_1(t_1),\mathfrak c_1(t_1)) \big \}.\]
Obviously, $\dim_H F' =d+ m -2$ and
\begin{align*}
\Gamma^\omega(F') =\bigcup_{(x,\mathbf t) \in F'} ( x + \gamma_\mathbf t ^\omega(I))  \subset K^{a,b} \times [-C,C]^{d-2}
\end{align*}
for some constant $C>0$ where $(a,b)=(a_i, a_j)$ for some $i<j$. Therefore, it is clear that  $\Gamma^\omega(F')$ is of Lebesgue measure zero in $\mathbb R^d$. 

Now we consider the case $m=d$, for which we make use of homogeneity of monomials. 
%By reparametrizing $\gamma_{\mathbf t}^\omega$, 
We set \[ g_{\mathbf t}(s)=\gamma_{\mathbf t}^\omega((t_1/t_2)^{1/(a_2-a_1)}s).\]
The first two components of $g_{\mathbf t}(s)$ are given by $b(\mathbf t)s^{a_1},$ $b(\mathbf t)s^{a_2}$,  where $b(\mathbf t)=(t_1^{a_2}/t_2^{a_1})^{1/(a_2-a_1)}$.
Set $J_*= \{ (t_1, t_2)\in (0,1]^2: t_1\ge t_2, \, t_1^{a_2}\le t_2^{a_1} \}$ and consider 
\[F'=\big\{ (x,\mathbf t) \in [0,1]^d \times J_*\times (0,1)^{m-2}: (x_1,x_2)=(\mathfrak c_1(b(\mathbf t)), \mathfrak c_1(b(\mathbf t)))\big\}.\] 
Then, we have
\[
\bigcup_{(x,\mathbf t) \in F'} \big(x+g_{\mathbf t}(I) \big)
\subset
\bigcup_{(t_1,t_2) \in J_*} \Big( (\mathfrak c_1(b(\mathbf t)),\mathfrak c_1(b(\mathbf t))) +b(\mathbf t) \gamma^{a_1,a_2}(I) \Big)\times[-C,C]^{d-2}
\]
for some $C>0$. Note that  
%$t_1/t_2\ge  1$ and $t_1^{a_2}/t_2^{a_1}\le 1$ if $\mathbf t \in I_*\times(0,1)^{m-2}$. 
$0< b(\mathbf t)<1$ and $\gamma_{\mathbf t}^\omega(I) \subset g_{\mathbf t}(I)$ if $\mathbf t \in J_*\times(0,1)^{m-2}$. 
Thus, it follows that $ \Gamma^\omega(F') \subset K^{a_1, a_2} \times[-C,C]^{d-2}$. So,  $\Gamma^\omega(F')$ is of Lebesgue measure zero.
\end{proof}

\subsection{Optimality of the regularity exponent in Theorem \ref{m-smoothing}}
\label{sec4.2}
We prove that the regularity exponent in Theorem \ref{m-smoothing} is sharp for some range of $\alpha$.
Recall that  $\gamma_{\mathbf t}^\omega(s)=(t_{\omega(1)} \gamma_1(s),\dots, t_{\omega(d)} \gamma_d(s))$
for an onto map $\omega: \{1,\dots,d\} \rightarrow \{1, \dots, m\}$.
We set 
\[ 
 k(\omega)  = \max_{1 \le j \le m} \big|\{ i \in [1,d]: \omega(i) = j \}\big|
\] 
which is the maximum number of repetition  in $\{\omega(1),\dots, \omega(d)\}$.
For example, let $\gamma(s)=(\cos (2\pi s), \sin (2\pi s), s)$. If $\omega(1)=\omega(2)=1$ and $\omega(3)=2$,
then $\gamma_{t_1,t_2}(s) = (t_1 \cos(2\pi s), t_1 \sin(2\pi s) , t_2 s)$ is a two parameter family of helices with $k(\omega) = 2$.

\begin{prop}\label{opt}
Let $d\ge2$, $1 \le m \le d$, and let $\gamma$ be a finite type curve in $\mathbb R^d$.  
Set  $n=d+m$. For $0<\alpha \le n$, suppose \eqref{m-ls} holds for all $\mu \in \mathfrak C^{n}(\alpha)$. 
Then, if  $m\le d-1$,  we have
\begin{equation}\label{lowers}
\sigma \ge
\begin{cases}
\, (n-\alpha-2)/p, \qquad \ &\text{if~} \hfill  \quad    n- k (\omega)-1 < \alpha \le n,\\[0.8ex]
\, (k(\omega)-1)/p, \qquad \ &\text{if~} \hfill \quad   0<\alpha  \le n-k(\omega)-1. 
\end{cases}
\end{equation} 
If $m=d$ and $\gamma( s) = (s^{a_1},\dots, s^{a_d})$ for $1\le a_1<\dots<a_d$ and $s\in I_\circ$, then 
\begin{equation}\label{md}
\sigma \ge
\begin{cases}
\, (n-\alpha-2)/p, \qquad \ &\text{if~} \hfill  \quad    n- 3 < \alpha \le n, \phantom{~-k(\omega)}\\[0.8ex]
\, 1/p, \qquad \ &\text{if~} \hfill \quad    0 < \alpha  \le n-3.~\phantom{-k(\omega)} 
\end{cases}
\end{equation} 
\end{prop}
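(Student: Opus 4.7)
The plan is to establish each of the two lower bounds in \eqref{lowers} and \eqref{md} by constructing explicit pairs $(f,\mu)$ with $\mu\in\mathfrak C^{n}(\alpha)$ that make the ratio $\|\mathcal A_{\gamma}^{\omega} f\|_{L^{p}(d\mu)}/(\langle\mu\rangle_\alpha^{1/p}\|f\|_{L^{p}_\sigma})$ blow up when $\sigma$ is strictly below the claimed exponent; the inequality \eqref{m-ls} then forces the claimed lower bound on $\sigma$. I would handle the two regimes separately.

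For the upper range of $\alpha$ (giving $\sigma\ge (n-\alpha-2)/p$), I would adapt the classical Knapp wave packet construction used to show sharpness of the local smoothing exponent $\sigma\ge -2/p$ in Theorem~\ref{thm-ls}. Fix a nondegeneracy point $s_0\in I$ and a large frequency $\lambda$, and choose a Schwartz function $f_\lambda$ whose Fourier transform is a smooth bump on a curvature-adapted Knapp cap normal to $\gamma'(s_0)$, with anisotropic sides dictated by the torsion condition \eqref{torsion}. Then $\|f_\lambda\|_{L^p_\sigma}\sim \lambda^\sigma\|f_\lambda\|_p$, and a stationary phase computation gives $|\mathcal A_\gamma^\omega f_\lambda|\gtrsim \|f_\lambda\|_\infty\cdot\lambda^{-\kappa}$ on an explicit ``plate'' region $E_\lambda\subset\mathbb R^{n}$ whose anisotropic geometry reflects both the curvature of $\gamma$ and the multi-parameter dilation. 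A Frostman-type $\alpha$-measure $\mu$ supported in $E_\lambda$ with $\langle\mu\rangle_\alpha\sim 1$ and maximal mass, substituted into \eqref{m-ls}, produces an inequality between powers of $\lambda$ which, after $\lambda\to\infty$, yields $\sigma\ge (n-\alpha-2)/p$.

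For the lower range of $\alpha$ (giving $\sigma\ge (k(\omega)-1)/p$ when $m\le d-1$, or $\sigma\ge 1/p$ when $m=d$), I would exploit the degeneracy produced by coincident indices of $\omega$ (respectively by the homogeneity of the monomial curve when $m=d$). After relabelling so that $\omega(1)=\cdots=\omega(k)=1$ with $k=k(\omega)$, take $f$ to be an $L^\infty$-normalized bump of height $\delta^{-(k-1)}$ concentrated on a tube of thickness $\delta$ in $(k-1)$ coordinate directions and of unit size in the remaining $d-k+1$ directions. Since $t_1$ uniformly dilates the first $k$ components of $\gamma_{\mathbf t}^\omega$, the set $E_\delta\subset\mathbb R^{n}$ where $|\mathcal A_\gamma^\omega f|$ is appreciable is a $\delta$-neighborhood of a ``compatibility'' surface of dimension $n-k+2$, cut out by $k-2$ coincidence conditions on the distinguished parameter $s$. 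Placing an $\alpha$-dimensional Frostman measure on this surface (which is possible since $\alpha\le n-k-1<n-k+2$) and balancing the resulting powers of $\delta$ in $\|\mathcal A_\gamma^\omega f\|_{L^p(d\mu)}$, $\langle\mu\rangle_\alpha^{1/p}$ and $\|f\|_{L^p_\sigma}$ would extract the required bound. In the $m=d$ monomial case, the scaling symmetry $s\mapsto\lambda s$, $t_j\mapsto \lambda^{-a_j} t_j$ under which $\gamma_{\mathbf t}$ is invariant reduces the sharpness question to a one-parameter scaling example that gives $\sigma\ge 1/p$.

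The main obstacle is the second regime: calibrating $(f,\mu)$ so that the $\delta$-powers match exactly at the claimed exponent requires delicate Fourier bookkeeping, and uses the finite-type hypothesis on $\gamma$ to rule out accidental cancellation that would weaken the lower bound. A secondary difficulty is choosing $\mu$ so that its Frostman exponent is exactly $\alpha$ while it concentrates on the much lower-dimensional compatibility surface, which forces a multi-scale Cantor-type construction of $\mu$ transverse to the surface.
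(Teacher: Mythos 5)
Your high-level strategy (construct explicit $(f,\mu)$ pairs and feed them into \eqref{m-ls}) is correct, but the specific constructions you propose diverge from the paper's and contain a concrete error.

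The paper does \emph{not} use two different examples for the two regimes of $\alpha$, and in particular does \emph{not} use a Knapp wave packet for the upper range. After reordering coordinates so that $t_1$ is the dilation parameter repeated $k=k(\omega)$ times and writing $x=(\widetilde x,\overline x)\in\mathbb R^k\times\mathbb R^{d-k}$, the paper takes a single ``focusing'' example
\[
f(x)=g(\widetilde x)h(\overline x),\qquad g(\widetilde x)=\sum_{\nu\in\lambda^{-1}\mathbb Z\cap I_*}\eta\bigl(\lambda(\widetilde x-\widetilde\gamma(\nu))\bigr),
\]
i.e.\ a smooth indicator of the $\lambda^{-1}$-neighborhood of the curve $\widetilde\gamma(I_*)$ in $\mathbb R^k$, tensored with a Schwartz bump $h$ whose Fourier transform is compactly supported. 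This $f$ lives in physical space as a tube around a curve; it is not frequency-localized to a Knapp cap. One shows $\|f\|_{L^p_\sigma}\lesssim\lambda^{\sigma-(k-1)/p}$ and $\mathcal A^\omega_\gamma f\gtrsim1$ on the box
\[
Q=\bigl\{(x,\mathbf t):\,|\widetilde x|\le\varepsilon_0\lambda^{-1},\ |t_1-1|\le\varepsilon_0\lambda^{-1},\ \overline x\in[-\varepsilon_0,\varepsilon_0]^{d-k},\ t_j\in[1/2,2]\bigr\}.
\]
The measure $\mu$ is always a normalized restriction of Lebesgue measure to this one box $Q$: $d\mu=\lambda^{n-\alpha}\chi_Q\,dx\,d\mathbf t$ when $n-k-1<\alpha\le n$, and $d\mu=\lambda^{k+1}\chi_Q\,dx\,d\mathbf t$ when $0<\alpha\le n-k-1$. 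Both exponents in \eqref{lowers} then fall out of the same chain $\mu(Q)^{1/p}\lesssim\|\mathcal A^\omega_\gamma f\|_{L^p(d\mu)}\lesssim\langle\mu\rangle_\alpha^{1/p}\|f\|_{L^p_\sigma}\lesssim\lambda^{\sigma-(k-1)/p}$ by letting $\lambda\to\infty$, since $|Q|\sim\lambda^{-(k+1)}$. The $m=d$ monomial case is handled by the change of variable $s\mapsto c(\mathbf t)s$ with $c(\mathbf t)=(t_1/t_2)^{1/(a_2-a_1)}$, after which the same $k=2$ analysis applies to a slightly modified box $Q'$.

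There is a genuine gap in your lower-range construction: you assert that the set where $\mathcal A^\omega_\gamma f$ is appreciable is a $\delta$-neighborhood of a compatibility surface of dimension $n-k+2$, cut out by $k-2$ coincidence conditions. The correct codimension is $k+1$, not $k-2$: the constraints from \eqref{gg} are $|\widetilde x|\lesssim\lambda^{-1}$ ($k$ conditions) together with $|t_1-1|\lesssim\lambda^{-1}$ (one condition), so the effective ``surface'' has dimension $n-k-1$, matching $|Q|\sim\lambda^{-(k+1)}$. Your dimension count would produce the wrong $\delta$-power in $\mu(Q)$ and hence the wrong lower bound on $\sigma$. Relatedly, there is no need for a transverse Cantor-type Frostman measure as you suggest: because $Q$ is a box with only two scales ($\lambda^{-1}$ and $1$), a constant multiple of $\chi_Q\,dx\,d\mathbf t$ already achieves $\langle\mu\rangle_\alpha\lesssim1$ for all $\alpha\le n-k-1$. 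Finally, for the upper range, your Knapp wave packet reproduces $\sigma\ge-2/p$ at $\alpha=n$, but it is not clear that the Knapp plate $E_\lambda$ has the right anisotropic geometry to support an $\alpha$-dimensional measure yielding the stronger bound $(n-\alpha-2)/p$ for $\alpha<n$; the paper avoids this issue entirely by reusing the focusing example in both regimes.
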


\begin{proof} 
Let $d$, $m$, and $k=k(\omega)$ be fixed, and denote $\widetilde x=(x_1,\dots,x_k)$  and $\overline x = (x_{k+1},\dots, x_d)$.

We consider the case $1 \le m \le d-1$ first. Note that $k\ge2$.
Without loss of generality, we may assume that $t_1$ is the parameter repeated $k$-times. 
Also, by a permutation, we may assume that $ ( t_{ \omega(1) },\dots,t_{ \omega(d)}  ) = (t_1 ,\dots,t_1, t_2,\dots,t_m)$.
% $ t_{\omega(j)} = t_1$ for all $j=1,2,\dots, k$. 
%In fact, permuting , 
So, let us write
\[  x +  {\gamma}_{\mathbf t}^\omega(s) =   (\widetilde x +t_1 \widetilde\gamma(s) ,~ \overline x + \overline {\gamma _{ \mathbf t  }^{ \omega  }} (s) )  \in \mathbb R^k\times \mathbb R^{d-k} .
\]
%where $\omega': \{k+1,\dots,d\} \mapsto \{2,\dots,m\}$ is an onto map and $\mathbf t' = (t_2,\dots, t_m)$.

Since $\widetilde \gamma$ is of finite type in $\mathbb R^k$, by Taylor's expansion $\widetilde \gamma(s)=\sum_{j=1}^k \widetilde{\gamma}^{(\ell_j)}(0)s^{\ell_j}+O(s^{\ell_k+1})$ such that $\widetilde{\gamma}^{(\ell_j)}(0) \neq 0$ for some integers $1 \le \ell_1 <\dots<\ell_k$ if $|s|\le \delta_1$ for a sufficiently small $\delta_1$.
By a change of variables, we may assume that there is an interval $I_*=[\delta_0,2\delta_0] \subset [0,1]$ such that $|\widetilde \gamma'(s)| \ge c$ on $I_*$ for a constant $c>0$.

Let $\eta \in C_0^\infty([-2,2]^k)$ such that $\eta\ge0$ and $\eta=1$ on $[-1,1]^k$.
We also choose a positive function $h \in \mathcal S(\mathbb R^{d-k})$ such that $\supp \widehat h \subset [-1,1]^{d-k}$ and $ h \ge1$ on $[-C,C]^{d-k}$ for $C\ge1+10\sup_{s \in I_\circ} |\gamma(s)|$.
For a fixed $\lambda \gg \delta_0^{-1}$, we set
\[
g(\widetilde x)= \sum_{\nu  \in \lambda^{-1}\mathbb Z \cap I_*} \eta \big(\lambda( \widetilde x -  \widetilde {\gamma}(\nu)) \big)
\]
and $f(x)= g(\widetilde x) h( \overline x )$.
Let 
\[ Q = \big\{(x, \mathbf t) \in [-\varepsilon_0,\varepsilon_0]^d \times [2^{-1},2]^{m}: |\widetilde x| \le \varepsilon_0\lambda^{-1}, ~|t_1-1| \le \varepsilon_0\lambda^{-1} \big\}.
\] 
Then, we have
\begin{align}\label{A-low}
\mathcal A_{\gamma}^\omega f (x,\mathbf t) = \prod_{j=1}^m \chi( t_j) \int g(\widetilde x +t_1 \widetilde{\gamma}(s)) \, h (  \overline x + \overline{\gamma_{\mathbf t}^{\omega}}(s) ) \psi(s)\,ds \gtrsim 1, 
\end{align}
whenever $(x,\mathbf t) \in Q$ for a  sufficiently small $\varepsilon_0>0$. 
To see this, note that
\begin{align}\label{gg}  g (\widetilde x + t_1 \widetilde{\gamma}(s))=
\sum_{\nu  \in \lambda^{-1}\mathbb Z \cap I_*} \eta \big(\lambda ( \widetilde x + (t_1-1) \widetilde \gamma(s) + \widetilde \gamma(s) - \widetilde \gamma(\nu)) \big) . 
\end{align}
So, $g \big(\widetilde x + t_1 \widetilde \gamma(s)\big) \gtrsim1$ for $s \in I_*$ if $(x, \mathbf t) \in Q$. 
Also, note $g \big(\widetilde x + t_1 \widetilde \gamma(s)\big) \ge 0$, for $s \in [-1,1]\setminus I_*$, and $h \big(  \overline x +  \overline {\gamma_{\mathbf t}^\omega}(s)   \big)\gtrsim1$, for $s \in [-1,1]$ and $(x, \mathbf t) \in Q$. 
Hence, we get \eqref{A-low}.

Now, we show 
\[ \|f\|_{L_\sigma^p(\mathbb R^d)} \lesssim \lambda^{\sigma-(k-1)/p}.
\]
We observe that $\|f\|_{L_\sigma^p(\mathbb R^d)} \lesssim \|g\|_{L_\sigma^p(\mathbb R^k)} \|h\|_{L^p(\mathbb R^{d-k})} + \|g\|_{L^p(\mathbb R^k)}\|h\|_{L_\sigma^p(\mathbb R^{d-k})}$ for $\sigma \ge 0$, and similarly $\|f\|_{L_\sigma^p(\mathbb R^d)} \lesssim \|g\|_{L_\sigma^p(\mathbb R^k)} \|h\|_{L^p(\mathbb R^{d-k})}$ for $\sigma<0$.
Those can be shown by using the Mikhlin multiplier theorem.  
Since $\|h\|_{L_\sigma^p(\mathbb R^{d-k})}\lesssim 1 $ for any  $\sigma\in \mathbb R$,  it suffices to show that
\begin{align}\label{g-norm}
\|g\|_{L_\sigma^p(\mathbb R^k)} \lesssim \lambda^{\sigma-(k-1)/p}.
\end{align}

Recall $\beta$, $\beta_0$ which are defined in the proof of Theorem \ref{m-smoothing}.
We decompose $g=\sum_{j\ge0} g_j$ such that $\widehat {g_j}( \widetilde \xi )=\widehat g(\widetilde \xi) \beta_j(\lambda^{-1}|\widetilde \xi|)$ with $\beta_j =\beta(2^{-j}\cdot)$ for $j\ge 1$.
Note that 
\[
\big((1+|\cdot|^2)^{\sigma/2} \widehat {g_j}\big)^\vee(\widetilde x)= \sum_{\nu  \in \lambda^{-1}\mathbb Z \cap I_*} \mathfrak G_{j,\nu}(\widetilde x) ,
\]
where
\[
\mathfrak G_{j,\nu}(\widetilde x)=\int  e^{2\pi i (\widetilde x - \widetilde \gamma(\nu)) \cdot \widetilde \xi} \lambda^{-k}\widehat \eta (\lambda^{-1} \widetilde \xi) \beta_j(\lambda^{-1}|\widetilde \xi|)(1+|\widetilde \xi|^2)^{\sigma /2} \,d \widetilde \xi.
\]
By rescaling $\widetilde \xi \rightarrow \lambda \widetilde \xi$, it is easy to show that
$|\mathfrak G_{j,\nu}(\widetilde x)| \lesssim 2^{-jN}\lambda^\sigma (1+\lambda| \widetilde x -  \widetilde \gamma(\nu)|)^{-N}$ for any $N\ge1$.
Since $|\widetilde \gamma'(s)| \ge c$ on $I_*$ for a constant $c>0$, we have  $|\widetilde \gamma(\nu)-\widetilde \gamma(\nu')| \ge c|\nu-\nu'|$ for $ \nu, \nu'\in I_*$. 
Therefore, we see $\| \sum_\nu  \mathfrak G_{j,\nu} \|_p \lesssim 2^{-jN}\lambda^{\sigma-(k-1)/p}$ for  any $N\ge 1$.
This gives $\sum_{j\ge0} \|g_j\|_{L_\sigma^p}  \lesssim \lambda^{\sigma-(k-1)/p}$ and hence \eqref{g-norm}.

We now take
\begin{align*}
d\mu(x, \mathbf t)=
\begin{cases}
\lambda^{n-\alpha} \chi_{Q}(x,\mathbf t) \, dxd\mathbf t,
\quad \ &\text{if}\hfill \quad n-k-1< \alpha \le n, \\[1ex]
\lambda^{k+1} \chi_{Q}(x,\mathbf t) \, dx d\mathbf t,
\quad \ &\text{if}\hfill \quad  0<\alpha \le n-k-1.
\end{cases}
\end{align*}
It is easy to see that $\mu \in \mathfrak C^{n}(\alpha)$ and $\langle \mu \rangle_\alpha \lesssim 1$. Indeed, 
when $n-k-1< \alpha \le n$, we note that $\mu(\mathbb B^{n} (z,r)) \lesssim \lambda^{n-\alpha}r^{n}$ if $r \le \lambda^{-1}$, and $\mu(\mathbb B^{n}(z,r)) \lesssim \lambda^{n-\alpha-k-1}r^{n-k-1}$ if $r\ge \lambda^{-1}$. 
Similarly, when $0<\alpha \le n-k-1$, we have $\mu(\mathbb B^{n}(z,r)) \lesssim \lambda^{k+1}r^{n}$ if $r \le \lambda^{-1}$ and $\mu(\mathbb B^{n}(z,r)) \lesssim r^{n-k-1}$ if $r>\lambda^{-1}$.
Combining the two cases, we see $\mu(\mathbb B^{n}(z,r)) \lesssim r^\alpha$ as desired.

Note that $|Q| \sim \varepsilon_0^{d+1}\lambda^{-k-1}$ and  $\mu(Q)\gtrsim \varepsilon_0^{d+1}\min\{\lambda^{n-\alpha-k-1},1\}$. Therefore, 
\begin{align}\label{AA}
\min\{\lambda^{(n-\alpha-k-1)/p},1\} \lesssim \| \mathcal A_{\gamma}^\omega f\|_{L^p(d\mu)} \lesssim \|f\|_{L_\sigma^p(\mathbb R^d)} \lesssim \lambda^{\sigma-(k-1)/p}.
\end{align}
Letting $\lambda\to \infty$, we get \eqref{lowers}.

\smallskip

We now consider the case $m=d$ and $\gamma( s) = (s^{a_1},\dots, s^{a_d})$. As before, we exploit  homogeneity of monomials. 
We denote $\widetilde x$ and $\overline x$ same as above with $k=2$, i.e., $\widetilde x=(x_1,x_2)$  and $\overline x = (x_{3},\dots, x_d)$. 
Set
 $c(\mathbf t)=(t_1/t_2)^{1/(a_2-a_1)}$. By the change of variable $s \mapsto c(\mathbf t)s$,  we have
\[
\mathcal A_\gamma^\omega f(x,\mathbf t) =\prod_{j=1}^d \chi(t_j) \int
f\big(x- \gamma_{\mathbf t}^\omega (c(\mathbf t)s) \big) \psi (c(\mathbf t)s )c(\mathbf t)\,ds.
\]
%

%Let $\widetilde \gamma(s)=(s^{a_1}, s^{a_2})$ with $1\le a_1<a_2$. 
Note that $ \widetilde{\gamma_{ \mathbf t}^\omega} (c(\mathbf t)s)=(t_1^{a_2}/t_2^{a_1})^{1/(a_2-a_1)}(s^{a_1},s^{a_2})$. Then, we can repeat the same argument as above. Let $f$ and $\mu$ be given in the same manner as before with $k=2$. 
We also set
\[ Q' = \{(x, \mathbf t) \in [-\varepsilon_0,\varepsilon_0]^d \times [2^{-1},2]^{d}: |\widetilde x| \le \varepsilon_0\lambda^{-1}, ~|(t_1^{a_2}/t_2^{a_1})^{1/(a_2-a_1)}-1| \le \varepsilon_0\lambda^{-1} \}.
\] 
Then $\mathcal A_\gamma^\omega f (x,\mathbf t) \gtrsim 1$ whenever $(x,\mathbf t) \in Q'$ for a sufficiently small $\varepsilon_0>0$.
Note that for each fixed $t_2$, we have $|t_1-t_2^{a_1/a_2}| \lesssim \varepsilon_0 \lambda^{-1}$ whenever  $(x,\mathbf t) \in Q'$.
Thus $|Q'| \sim \varepsilon_0^{d+1}\lambda^{-3}$ and hence $\mu(Q') \gtrsim \varepsilon_0^{d+1} \min\{\lambda^{n-\alpha-3},1\}$. This gives \eqref{md}.
\end{proof}

\subsection*{Acknowledgement} 
This work was supported by the NRF (Republic of Korea) grants  No. 2017R1C1B2002959 (Seheon Ham), No. 2022R1I1A1A01055527 (Hyerim Ko), and No.  2022R1A4A1018904 (Sanghyuk Lee and Sewook Oh).

%%%%%%%%%%%%%%%%%%%%%%%%%%%%%%%%%%%%%%%%%%%%%%%%%%%%%%%%%%%%%%%%%%%%%%%%
\bibliographystyle{plain}

\begin{thebibliography}{} %{CSWW}

\bibitem{BGHS} D. Beltran, S. Guo, J. Hickman, A. Seeger, \textit{Sharp $L^p$ bounds for the helical maximal function}, arXiv:2102.08272.

\bibitem{BGHS2} \bysame, \textit{Sobolev improving for averages over curves in $\mathbb R^4$},
Adv. Math., {\bf 393}  (2021)  108089.


\bibitem{BHS} D. Beltran, J. Hickman, C. D. Sogge, \textit{Variable coefficient Wolff-type inequalities and sharp local smoothing estimates for wave equations on manifolds}, Anal. PDE, \textbf{13} (2020), 403--433.

%\bibitem{BHS2} \bysame, \textit{Sharp local smoothing estimates for Fourier integral operators}, Proceedings of the conference Geometric Aspects of Harmonic Analysis, on the occasion of Fulvio Ricci’s 70th birthday, (2021).


%\bibitem{BL} J. Bergh, J. L\"ofstr\"om, \textit{Interpolation Spaces}, Grundlehren der mathematischen Wissenschaften, bd. 223, Springer-Verlag, Berlin, (1976).

\bibitem{BR} A. Besicovitch, R. Rado, \textit{A plane set of measure zero containing circumferences of every radius}, J. London Math. Soc., {\bf 43} (1968), 717--719.

\bibitem{Bourgain} J. Bourgain, \textit{Averages in the plane over convex curves and maximal operators}, J. Analyse. Math. {\bf 47} (1986), 69--85.


%\bibitem{Bourgain2} J. Bourgain, \textit{Besicovitch type maximal operators and applications to Fourier analysis}, Geometric and Functional Analysis (1991), 147--187. 

\bibitem{BD15} J. Bourgain, C. Demeter, \textit{The proof of the $l^2$ decoupling conjecture}, Ann. of Math. {\bf 182} (2015), 351--389.

\bibitem{BD17} \bysame, \textit{Decouplings for curves and hypersurfaces with nonzero Gaussian curvature}, J. Anal. Math. {\bf 133} (2017), 279--311.

\bibitem{Da} R. O. Davies, \textit{Another thin set of circles}, J. London Math. Soc. {\bf 5} (1972) 191--192.

%\bibitem{Er}  M. B. Erdo\u{g}an, \textit{A note on the Fourier transform of fractal measures}, Math. Res. Lett. \textbf{11} (2004) 299--313.

\bibitem{Fa} K. J. Falconer, \textit{The geometry of fractal sets}, Cambridge University Press  1985.

\bibitem{GLMX} C. Gao, B. Liu, C. Miao, Y. Xi, \textit{Square function estimates and Local smoothing for Fourier integral operators}, arXiv:2010.14390.

\bibitem{GWZ} L. Guth, H. Wang, R. Zhang, \textit{A sharp square function estimate for the cone in $\mathbb R^{3}$}, Ann. of Math.  \textbf{192} (2020), 551--581.

%\bibitem{HKL1} S. Ham, H. Ko, and S. Lee, \textit{Dimension of divergence set of the wave equation}, Nonlinear Anal. \textbf{215} (2022), 112631. 

\bibitem{HKL} S. Ham, H. Ko, S. Lee, \textit{Circular average relative to fractal measures}, To appear in Commun. Pure Appl. Anal., (2022), doi: 10.3934/cpaa.2022100.

\bibitem{H} L. H\"ormander, \textit{Fourier integral operators. $\mathrm I$}, Acta Math. \textbf{127} (1971), 79--183.

\bibitem{KOV} A. K\"{a}enm\"{a}ki, T. Orponen, L. Venieri, \textit{A Marstrand-type restricted projection theorem in $\mathbb R^3$}, 
arXiv:1708.04859v2.

%\bibitem{Kel} T. Keleti, \textit{Small union with large set of centers}, In: Barral, J., Seuret, S. (eds) Recent Developments in Fractals and Related Fields. FARF3 2015. Trends in Mathematics. Birkh\"auser, Cham.

\bibitem{Ki} J. Kinney, \textit{A thin set of circles}, Amer. Math. Monthly, {\bf 75} (1968), 1077--1081.

\bibitem{KLO1} H. Ko, S. Lee, S. Oh, \textit{Maximal estimate for average over space curve,}  Invent. Math., {\bf 228} (2022),  991--1035.  

\bibitem{KLO} \bysame, \textit{Sharp smoothing properties of averages over curves}, arXiv:2105.01628v4.

\bibitem{KW}  L. Kolasa, T. Wolff, \textit{On some variants of the Kakeya problem}, Pacific J. Math. \textbf{190} (1999),  111--154.  

%\bibitem{Lee} S. Lee, \textit{Linear and bilinear estimates for oscillatory integral operators related to restriction to hypersurfaces}, J. Funct. Anal. \textbf{241} (2006), no. 1, 56--98.

\bibitem{Le} B. Lepson, \textit{On a problem of Peter Fenton and the distance set of the Cantor set}, Notices Amer. Math. Soc. {\bf 23} (1976) A-507.

\bibitem{Marstrand} J. Marstrand, \textit{Packing circles in the plane}, Proc. London Math. Soc. \textbf{55} (1987), 37--58.

\bibitem{Mattila} P. Mattila, \textit{Fourier analysis and Hausdorff dimension}, Cambridge University Press, Cambridge, United Kingdom 2015. 

\bibitem{Mitsis} T. Mitsis, \textit{On a problem related to sphere and circle packing}, J. London Math. Soc. {\bf 60} (1999), 501--516. 

\bibitem{M} A. Miyachi, \emph{On some estimates for the wave equation in $L^p$ and $H^p$},  J. Fac. Sci. Univ. Tokyo Sect. IA Math. {\bf 27} (1980),  331--354. 

\bibitem{MSS} G. Mockenhaupt, A. Seeger, C. D. Sogge, \textit{Local smoothing of Fourier integral operators and Carleson-Sj\"olin estimates}, J. Amer. Math. Soc. \textbf{6} (1993),  65--130. 

\bibitem{O06} D. Oberlin, \textit{Packing spheres and fractal Strichartz estimates in $\mathbb R^d$ for $d\ge3$}, Proc. Amer. Math. Soc.,  \textbf{134} (2006), 3201--3209. 

\bibitem{O06'} \bysame, \textit{Restricted Radon transforms and unions of hyperplanes}, Rev. Mat. Iberoamericana, \textbf{22} (2006), 977--992.

\bibitem{O07} \bysame, \textit{Unions of hyperplanes, unions of spheres, and some related estimates}, Illinois J. of Math. \textbf{51} (2007), 1265--1274.

%\bibitem{O12} \bysame, \textit{Restricted Radon transforms and projections of planar sets}, Canad. Math. Bull. \textbf{55} (2012), 815--820.

%\bibitem{O14} \bysame, \textit{Exceptional sets of projections, unions of $k$-planes and associated transforms}, Israel J. Math. \textbf{202} (2014), 331--342. 

%\bibitem{PhSt} D. H. Phong and E. M. Stein, \textit{Hilbert integrals, singular integrals, and Radon transforms. I}, Acta Math. \textbf{157} (1986), 99--157.

\bibitem{PS} M. Pramanik, A. Seeger, \textit{$L^p$ regularity of averages over curves and bounds for associated maximal operators},  Amer. J. Math. \textbf{129} (2007),  61--103. 

%\bibitem{Schlag} W. Schlag, \textit{$L^p\rightarrow L^q$ estimates for the circular maximal function}, Ph.D. Thesis. California Institute of Technology, 1996.


\bibitem{SSS}  A. Seeger, C.D. Sogge, E.M. Stein, \emph{Regularity properties of Fourier integral operators}.
Ann. of Math.  {\bf 134} (1991), 231--251.

%\bibitem{Sogge} C. D. Sogge, \textit{Propagation of singularities and maximal functions in the plane}, Invent. Math. \textbf{104} (1991), no. 2, 349–376. 

\bibitem{Sogge2}  \bysame, \textit{Fourier integrals in classical analysis}, Cambridge Tracts in Mathematics, Cambridge University Press, 2017.

%\bibitem{Stein} E. M. Stein, \textit{Maximal functions : spherical means}, Proc. Nat. Acad. Sci. U.S.A. \textbf{73} (1976), 2174--2175.

%\bibitem{Stocke} B. M. Stocke, \textit{Differentiability properties of Besov potentials and Besov spaces}, Arkiv f\"or Mat. \textbf{22} (1984), 269--286.

\bibitem{Talagrand} M. Talagrand, \textit{Sur la mesure de la projection d'un compact et certaines familles de cercles},  Bull. Sci. Math. \textbf{104} (1980),  225--231. 

%\bibitem{Triebel} H. Triebel, \textit{Interpolation Theory, Function Spaces, Differential Operators}, North-Holland, New York, 1978.

\bibitem{Wo97}  T. Wolff, \emph{A Kakeya type problem for circles}, Amer. J. Math. \textbf{119} (1997), 985--1026.

\bibitem{Wo00} \bysame, \textit{Local smoothing estimates on $L^p$ for large $p$}, Geom. Funct. Anal. \textbf{10} (2000), 1237--1288.

\bibitem{Za} J. Zahl, \textit{On the Wolff circular maximal function}, Illinois J. Math. \textbf{56} (2012),  1281--1295.

\bibitem{Zi} W. P. Ziemer, \textit{Weakly Differentiable Functions}, Graduate Texts in Mathematics, Springer-Verlag, New York, 1989.


 
\end{thebibliography}

\end{document}